\documentclass{amsart}
\usepackage[utf8]{inputenc}
\usepackage[english]{babel}
\usepackage{url}
\usepackage[dvips]{graphics,epsfig}
\usepackage{times}
\usepackage{amsmath,amsthm,amssymb}
\usepackage{array}
\usepackage{bbm}
\usepackage{xcolor}
\usepackage{verbatim}
\usepackage{upgreek}
\usepackage[foot]{amsaddr}

\usepackage{geometry}

\def\span{\operatorname{span}}

\newtheorem{theorem}{Theorem}[section]

\newtheorem{lemma}[theorem]{Lemma}
\newtheorem{question}[theorem]{Question}
\newtheorem{cor}[theorem]{Corollary}
\newtheorem{prop}[theorem]{Proposition}
\newtheorem{fact}[theorem]{Fact}
\newtheorem{claim}[theorem]{Claim}

\newtheorem{conj}[theorem]{Conjecture}

\theoremstyle{definition}
\newtheorem{definition}[theorem]{Definition}

\theoremstyle{remark}
\newtheorem{remark}[theorem]{Remark}
\newcommand{\m}{\mathbb }

\def\Ind{\setbox0=\hbox{$x$}\kern\wd0\hbox to 0pt{\hss$\mid$\hss} \lower.9\ht0\hbox to 0pt{\hss$\smile$\hss}\kern\wd0}
\def\Notind{\setbox0=\hbox{$x$}\kern\wd0\hbox to 0pt{\mathchardef \nn=12854\hss$\nn$\kern1.4\wd0\hss}\hbox to 0pt{\hss$\mid$\hss}\lower.9\ht0 \hbox to 0pt{\hss$\smile$\hss}\kern\wd0}
\def\ind{\mathop{\mathpalette\Ind{}}}
\def\nind{\mathop{\mathpalette\Notind{}}}
\numberwithin{equation}{section}

\title{A counterexample to the stable forking conjecture}
\author{James Freitag and Scott Mutchnik}
\address{Department of Mathematics, Statistics, and Computer Science, University of Illinois at Chicago}
\email{jfreitag@uic.edu, mutchnik@uic.edu}
\thanks{SM was supported by the NSF under Grant No. DMS-2303034. (SM additionally received a grant of access to ChatGPT Pro through OpenAI's academic researcher support program, though the arguments in this paper were based on proofs generated using JF's ChatGPT account.) JF was supported by NSF grants No. DMS-2452197 and DMS-2554149.}

\begin{document}

\begin{abstract}
Using ChatGPT 5.6, we find a counterexample to the stable forking conjecture. This answers a long-standing open question of Hart, Kim and Pillay (1996). 
\end{abstract}

\maketitle

\section{Introduction}

The \textit{stable forking conjecture}, introduced by Hart, Kim and Pillay at the Fields Institute in 1996 (see \cite{Kim01}), represents a fundamental gap in our current understanding of the nature of simple theories. The success of simplicity theory as a research program in model theory is in large part due to the connections originally discovered by Kim \cite{Kim98} and Kim and Pillay \cite{KP99} between simplicity and stability. Stability is a stronger foundational property which was the predominant subject of pure model theory from the 1970s to the 1990s. Starting in the late 1990s, after Kim (\cite{Kim98}) and Kim and Pillay (\cite{KP99}) showed that the larger class of simple theories retains some of the useful properties of stable theories, simplicity theory became one of model theory's central research directions. Simplicity theory began the general program, which some call neostability theory, in which methods from stability theory have been adapted to more general settings. The stable forking conjecture is perhaps the most prominent open problem about the possibilities and limitations of such adaptations. 

Starting with Morley's categoricity theorem \cite{M65}, which says that  every first-order theory which is $\kappa$-categorical\footnote{has exactly one model of size $\kappa$} for some uncountable $\kappa$ is $\kappa'$-categorical for every uncountable cardinal $\kappa'$, classification theory originally concentrated on the question of determining the number of models of a first-order theory in each infinite cardinality. Stability is the property of a first-order theory that there does not exist a formula with the \textit{order property}, meaning a formula $\varphi(x, y)$ with a sequence $\{a_{i}, b_{i}\}_{i < \omega}$ such that $\models \varphi(a_{i}, b_{j})$ exactly when $i > j$. Stability was observed by Shelah \cite{Sh90} to be important to the more general problem of classifying first-order theories $T$ according to their numbers of models $I(T, \kappa)$ of cardinality $\kappa$ for each infinite cardinal $\kappa$. For uncountable cardinals $\kappa$, \textit{unstable} theories $T$ just have the maximum number of models of size $\kappa$: $I(T, \kappa)=2^{\kappa}$. By contrast, Shelah \cite{Sh90} proves the main result of a comprehensive model-counting classification for first-order theories (completed by Hart, Hrushovski and Laskowski in \cite{HHL00}) by introducing a structure theory for models of stable theories, based on his relation of \textit{forking independence} $\ind$.

Shelah's forking independence relation $A \ind_{C} B$, between subsets $A, B \subseteq M$ of a model $\mathbb{M} \models T$ of a theory over a subset $C \subset \mathbb{M}$, is a geometric phenomenon directly analogous to such genericity phenomena as linear disjointness in vector spaces or algebraic independence in fields. Specific incarnations of forking in various settings have sometimes played an important role in the development of those subjects (see e.g. differential algebra \cite{freitag2022any, freitag2023degree, freitag2017strong, nagloo2017algebraic}). There is a precise characterization of stable theories using properties of forking independence: forking independence is symmetric in stable theories, and stability is equivalent to stationarity of forking independence over models, or the uniqueness of nonforking extensions of types over models. The forking independence relation is now known to play an important role in the more general classification-theoretic research program initiated in \cite{Sh90}, \cite{She95}, and \cite{DS04} (see also \cite{SU08}). Much of the aim of modern classification theory is to classify unstable theories, including \textit{simple theories}. These theories, which omit formulas with the \textit{tree property} rather than the order property, are defined as follows:

\begin{definition}
    (Shelah, \cite{Sh90})

A theory is \textit{simple} if there is no formula $\varphi(x, y)$, $k < \omega$ and $\{b_{\eta}\}_{\eta \in \omega^{< \omega}}$ such that for each $\sigma \in\omega^{\omega}$, the set of formulas $\{\varphi(x,b_{\sigma|_n})\}_{n < \omega}$ is consistent, but for each $\eta\in\omega^{<\omega}$, $\{\varphi(x,b_{\eta^\smallfrown \langle i \rangle})\}_{i < \omega}$ is $k$-inconsistent.\footnote{A set of formulas is \textit{$k$-inconsistent} if any $k$ formulas in the set form an inconsistent set.} A non-simple theory is said to have the \textit{tree property}, or $\mathrm{TP}$.
\end{definition}

Kim \cite{Kim98} shows that forking independence is not only symmetric in stable theories, but is even symmetric in simple theories. Kim and Pillay \cite{KP99}, moreover, show that simplicity is \textit{equivalent} to symmetry of forking independence (just like stability is equivalent to stationarity of forking independence). They also prove that forking independence satisfies the \textit{independence theorem} over models in simple theories, a generalization of the stationarity property of forking in stable theories. Additionally, they give a characterization (Fact \ref{Kim-Pillay theorem} below) of simplicity as the existence of an \textit{abstract} independence relation satisfying properties similar to those of forking independence in stable theories. A relation satisfying these properties will itself coincide with forking independence. That independence in simple theories exhibits similar behavior to independence in stable theories demonstrates a deep connection between stability and simplicity, and lays the foundation for the broader program of using stability-theoretic phenomena to classify unstable theories more generally, including $\mathrm{NTP}_{2}$ theories and $\mathrm{NSOP}_{n}$ theories. The literature on independence in non-simple theories is extensive, but see \cite{CK09}, \cite{KR17}, \cite{NSOP2}, and \cite{SOPEXP} for some representative papers. 

The \textit{stable forking conjecture} makes an even deeper claim about the connection between simplicity and stability, asserting that forking independence in simple theories, already known to instantiate many of the properties characteristic of forking independence in stable theories, is actually determined by stable formulas themselves: formulas without the order property used to define instability.

\begin{conj}(\emph{Stable forking conjecture}, Hart, Kim and Pillay (1996), cited in \cite{HKP00}).

In a simple theory, let $a \nind_{C} b$. Then there is a formula $\varphi(x, \bar{b}) \in \mathrm{tp}(a / Cb)$ such that $\varphi(x, \bar{b})$ forks over $C$, and such that the parameter-free formula $\varphi(x, y)$ is stable (i.e. does not have the order property).

\end{conj}

Progress toward the stable forking conjecture has come from several sources:

\begin{itemize}
    \item Kim (\cite{Kim01}) shows that one-based theories with elimination of hyperimaginaries satisfy the conclusion of the stable forking conjecture. (See also Casanovas and Potier (\cite{casanovaspotier2018}), who show preservation of the conclusion of the stable forking conjecture in $T^{\mathrm{eq}}$. Additionally, see Brower and Hill \cite{BrowerHillHyperimaginaries}, who give a simplified proof that finite-rank supersimple one-based theories with weak elimination of hyperimaginaries satisfy the conclusion of the stable forking conjecture, which already follows from Kim's work and \cite{BuechlerPillayWagner2001}).

    \item Peretz (\cite{Per06}) prove the conclusion of the stable forking conjecture over an empty base $C = \emptyset$ between rank-two types in countably categorical supersimple theories.

    \item Brower (\cite{Brower2012}) improves Peretz's result using new techniques, removing the countable categoricity requirement and weakening the rank-two hypothesis on \textit{one} of the two types to being of any finite rank.

    \item Palacín and Wagner (\cite{PW13}) prove the conclusion of the stable forking conjecture in countably categorical, $\mathrm{CM}$-trivial supersimple theories.
    \item With John Baldwin, the authors of this paper (\cite{KoponenConjecture}) make progress on the \textit{simple Kim-forking conjecture} in $\mathrm{NSOP}_{1}$ theories.
\end{itemize}

Moreover, the second author (\cite{MutchnikReducing}) gives an \textit{existential} generalization of \textit{universal} results claimed by Peretz in rank three, reducing stability of the forking relation over a base to finitely many pregeometries in any fixed finite rank. 

Some additional motivation for the stable forking conjecture comes from consequences of its generalizations for other problems in classification theory:

\begin{itemize}
    \item Gomez (\cite{GomezRosyTheories}) shows that the \textit{stable Kim-forking conjecture} in $\mathrm{NSOP}_{1}$ theories implies that every $\mathrm{NSOP}_{1}$ rosy theory is simple, which would resolve a problem of \cite{D19} and \cite{K21}.

    \item The \textit{dependent forking conjecture} (or ``dependent dividing conjecture" as in \cite{Che14}) over sets in $\mathrm{NTP}_{2}$ theories implies, as observed by Chernikov in a personal communication as well as independently by the second author\footnote{Chernikov noted the requirement that the dependent dividing conjecture must hold over all sets, not just models}, that every $\mathrm{NTP}_{2}$ theory is resilient;  see the introduction to \cite{MutchnikReducing}. This would resolve a problem from \cite{BYC07}.

    \item The dependent dividing conjecture over models in $\mathrm{NTP}_{2}$ theories implies that $\mathrm{NSOP}_{n} \cap \mathrm{NTP}_{2} = \mathrm{simple}$, which would answer a question from \cite{Che14}; again, see the introduction to \cite{MutchnikReducing}.
\end{itemize}

See also Onshuus (\cite{Ons06}), Onshuus and Ealy (\cite{ealy2016thorn}), Palacín (\cite{palacin2012omega}), and Takahashi \cite{TakahashiConsequences}.

The progress that we will make on the stable forking conjecture in this paper is as follows:

\begin{theorem}
    The stable forking conjecture is false.
\end{theorem}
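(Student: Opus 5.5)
To refute the conjecture we must exhibit a simple theory $T$, a set $C$ and tuples $a,b$ with $a \nind_C b$, such that every formula $\varphi(x,\bar b)\in \mathrm{tp}(a/Cb)$ which forks over $C$ has $\varphi(x,y)$ unstable. The plan is to build $T$ as the theory of a Fraïssé limit, or as a generic expansion in the style of a random hypergraph, in which the only source of forking is a \emph{non-binary} random relation. Random structures are simple but unstable, so they are the natural home for forking that cannot be witnessed stably. A concrete first attempt is a two-sorted structure. There is a sort $P$ carrying a pure equivalence relation or algebraic closure structure that creates forking. On top of it sits a generic ternary relation $R$, together with a definable dependence between elements that can only be expressed through formulas involving $R$. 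The key design requirement is that the forking formula between $a$ and $b$ must ``pass through'' a random relation. This is needed because all stable formulas should only see an algebraically trivial reduct in which $a \ind_C b$.

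The steps, in order, are as follows.
\begin{enumerate}
\item Define the class of finite structures and verify amalgamation. This yields a countable $\omega$-categorical homogeneous limit $M$ with quantifier elimination in a suitable language.
\item Prove simplicity by verifying the Kim--Pillay criterion (Fact~\ref{Kim-Pillay theorem}) for an explicitly defined independence relation $\ind^*$, described in terms of algebraic closure and the generic relations. The independence theorem over models should follow from free amalgamation of the generic part. By Kim--Pillay, $\ind^*$ then equals forking independence.
\item Choose $C,a,b$ with $a\nind^*_C b$, for instance $a\in\mathrm{acl}$-like closure of $Cb$ through a witness defined using $R$, but not inside any stable-definable closure.
\item Classify stable formulas. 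Using quantifier elimination, show that any $\varphi(x,y)$ that is a Boolean combination involving $R$ nontrivially has the order property, by realizing arbitrary bipartite patterns generically. Hence stable formulas are equivalent modulo $T$ to formulas in the reduct language. In that reduct, $a\ind_C b$ holds in the sense of the reduct's own forking, and forking in the reduct is detected by formulas that also do not fork in $T$.
\item Conclude that no stable formula in $\mathrm{tp}(a/Cb)$ forks over $C$.
\end{enumerate}

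The main obstacle is step 4, together with its tension with step 3. A formula mentioning $R$ may become stable after conjunction with other atomic formulas. For example, it may be stable if it restricts to a small or algebraic set, as with a formula of the form $x=c\wedge\ldots$. Moreover, any forking formula forces $x$ into a set that is ``thin'' over $Cb$, and thin definable sets tend to be stable. So the construction must ensure that forking comes from a \emph{non-algebraic} dividing pattern, such as dividing along a random-graph indiscernible sequence, while the set defined remains large and generic inside an infinite definable family. I would first try to realize this with hyperimaginaries or with $C$ a non-model set, since all known positive results assume one-basedness, elimination of hyperimaginaries, or low rank. Accordingly, I would aim for a non-one-based, supersimple example of rank at least $3$, in which dividing is witnessed by a generic $3$-ary relation, and then verify the order property for every relevant formula by an explicit extension-property argument.
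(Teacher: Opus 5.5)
Your step 2 (verifying Fact~\ref{Kim-Pillay theorem} for an explicit relation) is also the paper's method. Beyond that, this is a research plan rather than a proof. No theory is ever specified, and the decisive step is unresolved and, as stated, false. Step 4 claims that every formula involving $R$ nontrivially has the order property. Yet for a generic ternary $R$, the formula $\varphi(x;y_1y_2y_3):=R(x,y_1,y_2)\wedge x=y_3$ is stable, since each instance has at most one solution, and it is not equivalent to an $R$-free formula. Step 3 is self-defeating as suggested. If $a\in\mathrm{acl}(Cb)\setminus\mathrm{acl}(C)$, then $\mathrm{tp}(a/Cb)$ forks over $C$. So it contains a forking formula $\theta(x,\bar b)$ and an algebraic $\psi(x,\bar b)$ with exactly $k$ solutions. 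Then $\theta(x,\bar y)\wedge\psi(x,\bar y)\wedge\exists^{\le k}x\,\psi(x,\bar y)$ is stable and forks, so algebraic dependence always satisfies the conjecture. More basically, instability of the theory is not what matters: in the random graph or random hypergraph, forking is witnessed by equality. What is needed is instability of the \emph{forking relation itself}, and then no classification of stable formulas is required. The paper exhibits an indiscernible sequence $(a_i,b_i)_{i<\omega}$ with $a_i\nind b_j$ if and only if $i<j$. Let $ab\models\mathrm{tp}(a_ib_j)$ for some $i<j$, and let $\varphi(x,b)\in\mathrm{tp}(a/b)$ fork over $\emptyset$. Then $\models\varphi(a_i,b_j)$ for $i<j$ by indiscernibility, and $\models\neg\varphi(a_i,b_j)$ for $i\ge j$ because $a_i\ind b_j$. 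So $\varphi$ is unstable.

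Your intended class of examples also aims where counterexamples are hardest to find. $\omega$-categorical generic constructions of finite rank are exactly where the positive results and constraints sit:
Kim \cite{Kim01} for one-based theories with elimination of hyperimaginaries;
Palac\'{\i}n--Wagner \cite{PW13} for $\omega$-categorical CM-trivial supersimple theories;
Peretz and Brower \cite{Per06, Brower2012} in low rank;
and the finite-rank constraints of \cite{MutchnikReducing}.
Free-amalgamation-style constructions tend to be one-based or CM-trivial, and the paper itself leaves the $\omega$-categorical case open. The missing idea is a mechanism that makes a familiar, Kim--Pillay-verifiable independence relation unstable. The paper takes an infinite-dimensional module over the division ring of fractions of the quantum algebra of the random bipartite graph. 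The language has only multiplication by the generators $X_g$. Forking is linear disjointness over $Q_{\Gamma(\mathbb{M})}$ of closures, together with disjointness of their graph parts. Since $X_iX_j=X_jX_i$ when $E(i,j)$ and $X_iX_j=-X_jX_i$ otherwise, the spans of $(v_0+X_iv_1,\,v_2+X_iv_3)$ and $(v_0+X_jv_2,\,v_1+X_jv_3)$ meet nontrivially exactly when $E(i,j)$. Erd\H{o}s--Rado then yields the indiscernible order-property instance. The resulting theory is supersimple, but neither $\omega$-categorical nor of finite rank.
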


 The counterexample is the theory of an infinite-dimensional vector space over the division ring of fractions of the quantum graph algebra of the random graph (as in \cite{Goodearl2024Binomial}).

\subsection{Use of AI}
This is an AI-generated result proven with the help of GPT-5.6 Sol. Specifically, we prompted ChatGPT to construct a counterexample to the stable forking conjecture with a detailed series of prompts which took into account the many restrictions and likely restrictions such a counterexample would have to satisfy. For instance, we prompted it to use the strategy, standard in the literature, of proving the example is simple and characterizing forking independence by finding an abstract independence relation satisfying the criteria shown by Kim and Pillay (\cite{KP99}, Fact \ref{Kim-Pillay theorem}), to characterize independence in simple theories. Recent work of the second author in \cite{MutchnikReducing}, giving existential, higher-rank generalizations to results claimed by Peretz in \cite{Per06}, also highly constrains the type of counterexamples to stable forking which might exist in a finite rank setting, hence our search for an infinite rank counterexample. 

The proofs of the relevant facts around the counterexample as well as some aspects of the specific setup have been written entirely by the authors, and no AI tool was used in the actual writing of this manuscript. (AI was used for proofreading, with the edits performed and evaluated manually by the authors.) In retrospect, it seems unlikely that this counterexample would have been found in the near term without the use of generative AI, though model theoretic investigations of noncommutative rings in various languages are not without precedent in the recent literature \cite{itai2011note, itai2012first, itai2015notes, itai2017model}.

\subsection{Motivating examples}

A main \textit{stable} example from the literature   motivating our counterexample to the stable forking conjecture is the theory of an infinite-dimensional vector space over an algebraically closed field, such as the field of complex numbers $\mathbb{C}$. This vector space example can be found in, say, \cite{P96}. 

We work in the two-sorted structure $(K, V)$ in the field sort $K$ and the vector space sort $V$, in the language with field addition and multiplication, vector space addition, and scalar multiplication. Sets within models of this theory, say a sufficiently saturated ambient model $\mathbb{M}$, come with a closure operator: we may define $K(A)$-vector and $K(B)$-vector spaces $(K(A), V(A)) \subseteq (K(B), V(B))$ to satisfy the strong inclusion relation $(K(A), V(A)) \leq (K(B), V(B))$ if $K(A)$ is algebraically closed in $K(B)$, and some (equivalently every) $K(A)$-basis for $V(A)$ remains independent over $V(B)$. This is similar to the relation defined by the conditions of Proposition \ref{strong inclusion} below. Then the closure $\mathrm{cl}(A)$ for $A \subseteq \mathbb{M}$ is the smallest $A' \subseteq \mathbb{M}$ with $A \subseteq A'$ and $A' \leq \mathbb{M}$ (as in Definition \ref{closure with respect to strong inclusion}).\footnote{This will coincide with the usual model-theoretic algebraic closure, though we will not need to explicitly refer to the model-theoretic algebraic closure in this paper.} The theory has embedding and quantifier elimination properties with respect to the closure as in Lemma \ref{embedding property for strong inclusions} and Proposition \ref{closure with respect to strong inclusion}. Crucially, independence in this theory, up to the closure operator, is determined by independence of algebraically closed fields and global linear disjointness in the vector space sort: $A \ind_{C} B$ if and only if $K(\mathrm{cl}(AC)) \ind^{\mathrm{ACF}}_{K(\mathrm{cl}(C))} K(\mathrm{cl}(BC))$ and $\mathrm{Span}_{K(\mathbb{M})}(V(\mathrm{cl}(AC))) \cap \mathrm{Span}_{K(\mathbb{M})}(V(\mathrm{cl}(BC))) = \mathrm{Span}_{K(\mathbb{M})}(V(\mathrm{cl}(C))) $.

A related, implicitly familiar example of a stable theory is the theory of an infinite-dimensional vector space over the field of fractions of a ring of $\mathbb{Q}$-polynomials in infinitely many variables, such as the field of fractions $\mathbb{Q}(X_{1}, \ldots, X_{n}, \ldots )$ of the polynomial ring $\mathbb{Q}[X_{1}, \ldots, X_{n}, \ldots ]$ in variables $\{X_{i}\}^{\infty}_{i = 1}$. (Like the previous example, this is in fact superstable.) A model of this theory is \textit{not} viewed as a structure in the full field sort. Rather, it is viewed as a two-sorted structure $(S, V)$ where $V$ is the vector space sort and $S$ is a sort indexing \textit{generators} of the polynomial ring, in the language with vector space addition and scalar multiplication of a vector $v \in V$ by a \textit{generator} $X_{s}$ indexed by $s \in S$. However, similarly to the discussion in Section \ref{Tnegstfk}, we can canonically recover the full $Q_{S} : = \mathbb{Q}(\{X_{s}\}_{s\in S})$-vector space structure from the structure on a vector space considered in this language. Similarly to the example of the theory of an infinite-dimensional vector space over an algebraically closed field, we obtain a strong inclusion relation $\leq$ as in Section \ref{Tnegstfk}, a closure operator $\mathrm{cl}$ as in Definition \ref{closure with respect to strong inclusion}, and embedding and quantifier elimination properties as in Lemma \ref{embedding property for strong inclusions} and Proposition \ref{closure with respect to strong inclusion}. The theory will be stable, and forking independence will coincide with global linear disjointness of closures:  $A \ind_{C} B$ if $\Gamma(A) \cap \Gamma(B) = \Gamma(C)$ and $\mathrm{span}_{Q_{S(\mathbb{M})}}(V(\mathrm{cl}(AC)))\cap\mathrm{span}_{Q_{S(\mathbb{M})}}(V(\mathrm{cl}(BC))) = \mathrm{span}_{Q_{S(\mathbb{M})}}(V(\mathrm{cl}(C))) $.

The main idea of our counterexample is that the commutative field in this stable example can be replaced by a \textit{noncommutative} division ring so that forking independence continues to coincide with this global linear disjointness relation, but where the global linear disjointness relation itself is no longer a stable relation.\footnote{The forking relation $\nind_{\emptyset}$, which is not necessarily definable, is defined to be stable if there is no indiscernible instance $\{a_{i}, b_{i}\}_{i< \omega}$ of the order property, so where $a_{i} \nind_{\emptyset} b_{j}$ exactly when $i < j$. See \cite{PW13} for this definition. As made explicit in Remark 7.5 of \cite{KoponenConjecture}, the conclusion of the stable forking conjecture requires the forking relation to be stable.} The noncommutativity of the division ring ends up encoding the order property in the intersection properties of closed subspaces. 

Here, instead of an infinite-dimensional vector space over the fraction field of a polynomial ring  over an infinite set of generators, a model of our theory that is a counterexample to the stable forking conjecture will be an infinite-dimensional module over the fraction division ring of the noncommutative quantum algebra associated with the random (bipartite) graph, as recalled in Subsection \ref{quantum division algebra}. Just as with global linear disjointness in our polynomial-ring example, global linear disjointness in this theory satisfies the required stability-generalizing properties from the Kim-Pillay theorem (Fact \ref{Kim-Pillay theorem}) for the relation to coincide with forking independence and the theory to be simple. The reason why the theory is not stable is that global linear disjointness is no longer stationary, due to the random graph structure. However, in this replacement of a commutative field with a noncommutative division ring, we lose stability of global linear disjointness. So informally speaking, the reason why we obtain a counterexample to the stable forking conjecture is not that we obtain a particularly unusual characterization of forking independence in a simple theory. Rather, we obtain a simple theory where the forking relation is familiar, characterized by global linear disjointness. But the familiar relation of \textit{global linear disjointness} now has the (at least formerly) unusual feature of representing the order property, whose absence was taken for granted over commutative fields.

\subsection{Organization}

In Section \ref{preliminaries} we introduce the necessary algebra for defining and proving quantifier elimination properties in the theory that will serve as our counterexample. We will start with Subsection \ref{quantum division algebra}, where we introduce the quantum division algebra of a graph from the literature on quantum algebra (\cite{Goodearl2024Binomial}). We will review the proof, following the proofs of well-known facts in noncommutative noetherian ring theory (found in a standard reference such as \cite{mcconnell1987noncommutative}), that the quantum division algebra of a graph has a division ring of fractions. We will conclude by proving a fact, which will be useful later, yielding a least set of vertices necessary to define any element of this division ring (Corollary \ref{least support}). Then, in Subsection \ref{modules over division rings}, we will review the necessary module theory over division rings, in particular showing the robustness of the definition of strong inclusion of modules over division rings. In Section \ref{Tnegstfk}, we will define the theory $T^{\neg \mathrm{StFk}}$ that we will show to be a counterexample to the stable forking conjecture, the theory of an infinite-dimensional module over the fraction division ring of the random bipartite graph's quantum algebra. Particularly, we will introduce the language with scalar multiplication by generators and show that the full module structure can be recovered from the structure in this language. We will then prove the embedding property for $T^{\neg \mathrm{StFk}}$ with respect to strong inclusion, show the existence of the closure of a set, and show that the theory $T^{\neg \mathrm{StFk}}$ satisfies quantifier elimination up to closure. Finally, in Section \ref{counterexample verification}, we will verify that $T^{\neg \mathrm{StFk}}$ is a counterexample to the stable forking conjecture. In Subsection \ref{simplicity of the example}, we will show that global linear disjointness of closures satisfies the Kim-Pillay axioms. This will prove simplicity (in fact, supersimplicity) of $T^{\neg \mathrm{StFk}}$ and confirm that forking-independence coincides with global linear disjointness. By simplicity, $T^{\neg \mathrm{StFk}}$ will be a candidate for being a counterexample to the stable forking conjecture. In Subsection \ref{instability of the independence relation} we will show that $T^{\neg \mathrm{StFk}}$ is in fact a counterexample: global linear disjointness, and thus the forking relation, will be an unstable relation.

\section{Algebraic preliminaries}\label{preliminaries}

A model of our counterexample to the stable forking conjecture will be a module over a noncommutative division ring with a generating set indexed by the vertices of a bipartite graph, in the language with symbols for module addition, scalar multiplication \textit{by members of the generating set}, and the graph relation on the generating set. We first describe the family of rings that this module's ring of scalars will belong to.

\subsection{The quantum division algebra of a bipartite graph}\label{quantum division algebra}

Let $A = I_{A} \sqcup J_{A}$ be a finite bipartite graph with edge relation $E_{A}$, with respect to the partition into sorts $I_{A}$, $J_{A}$. We define a noncommutative (associative) ring $R_{A}$ as follows: define its underlying set $R_{A} : = \mathbb{Q}\langle\{ X_{v} \}_{v \in A} \rangle / V$ to be the $\mathbb{Q}$-vector space quotient of the (associative) polynomial $\mathbb{Q}$-algebra $\mathbb{Q}\langle\{ X_{v} \}_{v \in A}\rangle$ in \textit{noncommuting} variables $X_{v}$ indexed by vertices $v \in A$, where $V$ is the left-right ideal of $\mathbb{Q}\langle\{ X_{v} \}_{v \in A} \rangle $ consisting of the $\mathbb{Q}$-subspace spanned by elements of $\mathbb{Q}\langle\{ X_{v} \}_{v \in A} \rangle $ of the form $r s t $, where $r, t \in \mathbb{Q}\langle\{ X_{v} \}_{v \in A} \rangle $ and $s$ is of one of the following forms:

\begin{itemize}
    \item For $i, i' \in I_{A}$, $s = X_{i}X_{i'}-X_{i'}X_{i}$.
    
    \item For $j, j' \in J_{A}$, $s = X_{j}X_{j'}-X_{j'}X_{j}$.

    \item For $i \in I_{A}$ and $j \in J_{A}$ with $ E_{A}(i, j)$, $s = X_{i}X_{j} - X_{j}X_{i}$.

    \item For $i \in I_{A}$ and $j \in J_{A}$ with $ \neg E_{A}(i, j)$, $s = X_{i}X_{j} + X_{j}X_{i}$.
\end{itemize}

Then because $V$ is a left-right ideal of $\mathbb{Q}\langle\{ X_{v} \}_{v \in A} \rangle $, the multiplication on $\mathbb{Q}\langle\{ X_{v} \}_{v \in A} \rangle $ yields a well-defined multiplication on $R_{A}$. (This is the quantum affine space $\mathcal{A}_{\mathbf{q}}$ as in \cite{Goodearl2024Binomial}.) We give a basis for $R_{A}$ over $\mathbb{Q}$:

\begin{fact}\label{ordered basis}
Let $A = (I_{A} \sqcup J_{A}, E_{A})$ be a finite bipartite graph, and let $v_{1}, \ldots, v_{n}$ be a fixed enumeration of the vertices of $A$. For $\bar{i} = (i_{1}, \ldots, i_{n}) \in \omega^{n} $, let $X^{\bar{i}} =: X_{v_{1}}^{i_{1}} \ldots X_{v_{n}}^{i_{n}}$. Then (the $V$-cosets of) the $X^{\bar{i}}$, where $\bar{i}$ ranges over $\omega^{n}$, form a basis for $R_{A}$ over $\mathbb{Q}$.
\end{fact}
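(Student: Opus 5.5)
Two things must be shown: that the ordered monomials $X^{\bar{i}}$ span $R_{A}$, and that they are linearly independent over $\mathbb{Q}$. All relations have the form $X_{u}X_{w} = q_{uw}X_{w}X_{u}$ with $q_{uw} \in \{1,-1\}$, $q_{wu} = q_{uw}^{-1} = q_{uw}$, and $q_{uu}=1$. Here $q_{uw} = -1$ exactly when one of $u,w$ lies in $I_{A}$, the other in $J_{A}$, and they are not adjacent.

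For spanning, every element of $\mathbb{Q}\langle\{X_{v}\}_{v\in A}\rangle$ is a $\mathbb{Q}$-combination of words in the $X_{v}$. I would argue by induction on the number of inversions of a word $w$ with respect to the fixed enumeration $v_{1},\ldots,v_{n}$. If $w$ contains an adjacent pair $X_{v_{k}}X_{v_{l}}$ with $k>l$, then modulo $V$ we have $w \equiv \pm w'$, where $w'$ swaps that pair and has strictly fewer inversions. This works because the difference $w \mp w'$ has the form $r s t$ with $s$ one of the listed generators, or its negative. A word with no inversions is exactly some $X^{\bar{i}}$, so the cosets of the $X^{\bar{i}}$ span.

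For independence, I would construct a representation rather than use a diamond-lemma overlap check. Let $P$ be the $\mathbb{Q}$-vector space with basis $\{e_{\bar{i}}\}_{\bar{i}\in\omega^{n}}$. For each vertex $v_{k}$, define an operator $T_{k}$ on $P$ by
\[
T_{k}e_{\bar{i}} = \Big(\prod_{l<k} q_{v_{k}v_{l}}^{\,i_{l}}\Big)\, e_{\bar{i}+\epsilon_{k}},
\]
which mimics left multiplication by $X_{v_{k}}$ moved past the earlier variables into its slot. By the universal property of the free algebra, $X_{v_{k}}\mapsto T_{k}$ gives a homomorphism $\mathbb{Q}\langle\{X_{v}\}\rangle \to \mathrm{End}_{\mathbb{Q}}(P)$. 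I would then check that $T_{k}T_{l} = q_{v_{k}v_{l}}T_{l}T_{k}$ for all $k,l$. Both sides send $e_{\bar{i}}$ to a multiple of $e_{\bar{i}+\epsilon_{k}+\epsilon_{l}}$. For $k<l$, the scalars differ only in the factor that arises when $T_{l}$ passes the new exponent in slot $k$, and that factor is exactly $q_{v_{l}v_{k}}$. Hence each generator $s$ maps to $0$, and since the homomorphism is multiplicative, so does every $r s t$. So the map factors through $R_{A}$.

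Finally, applying the image of $X^{\bar{i}} = X_{v_{1}}^{i_{1}}\cdots X_{v_{n}}^{i_{n}}$ to $e_{\bar 0}$ gives $e_{\bar{i}}$ exactly. The rightmost factors act first, so each $T_{k}$ acts when all earlier slots $l<k$ have exponent $0$, and every scalar is $1$. Therefore a vanishing $\mathbb{Q}$-combination of the $X^{\bar{i}}$ in $R_{A}$ gives a vanishing combination of distinct basis vectors $e_{\bar{i}}$, so all its coefficients are zero. The only delicate step is the commutation check $T_{k}T_{l} = q_{v_{k}v_{l}}T_{l}T_{k}$, which amounts to sign bookkeeping using $q_{uw}q_{wu}=1$.
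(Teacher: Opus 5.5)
Your proof is correct, and it is close in spirit to the paper's argument, though organized differently.

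\textbf{The paper's route.} The paper puts an associative algebra structure directly on the formal span $M$ of the ordered monomials. It uses the twisted product $X^{\bar{i}}\cdot X^{\bar{j}}=\eta(\bar{i},\bar{j})X^{\bar{i}+\bar{j}}$ and proves associativity from the bimultiplicativity of $\eta$. It then notes that the $X_{v}$ satisfy the defining relations in $M$, which yields an algebra map $R_{A}\to M$ sending the coset of $X^{\bar{i}}$ to $X^{\bar{i}}$.

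\textbf{Your route.} Your operators $T_{k}$ are exactly left multiplication by $X_{v_{k}}$ in that algebra, so you are working with its left regular representation. Associativity then comes for free inside $\mathrm{End}_{\mathbb{Q}}(P)$. The only verification left is $T_{k}T_{l}=q_{v_{k}v_{l}}T_{l}T_{k}$; the paper's route needs the same check in the form of the relations among the $X_{v}$ in $M$. Evaluating at $e_{\bar 0}$ replaces the paper's identification of cosets with elements of $M$.

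\textbf{What each buys.} The paper's version immediately produces the explicit multiplication rule $X^{\bar{i}}X^{\bar{j}}=\eta(\bar{i},\bar{j})X^{\bar{i}+\bar{j}}$. It reuses this rule for the degree argument and for the skew Laurent polynomial and series constructions. Yours needs slightly less bookkeeping, and the same rule follows afterwards from your basis together with the relations.

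You also give an explicit spanning argument by induction on the number of inversions. The paper's proof leaves spanning implicit: its map $R_{A}\to M$ only yields linear independence of the cosets. So your proposal is, if anything, more complete on that point.
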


\begin{proof}
    Let $M$ be the subspace of $\mathbb{Q}\langle\{ X_{v} \}_{v \in A})$ spanned by the $X^{\bar{i}} = X_{v_{0}}^{i_{1}} \ldots X_{v_{n}}^{i_{n}}$ for $\bar{i} \in \omega^{n}$. It suffices to define a $\mathbb{Q}$-linear map from $R_{A}$ to $M$ such that for $\bar{i} \in \omega^{n}$, the $V$-coset of $X^{\bar{i}}$ is mapped to $X^{\bar{i}}$. Let us define an associative $\mathbb{Q}$-algebra structure on $M$. (The $\mathbb{Q}$-linear map we will define from $R_{A}$ to $M$ will be a map of associative $\mathbb{Q}$-algebras.) For $1 \leq i < j \leq n$, define $\eta_{i, j}$ to be $-1$ if $v_{i} \in I$, $v_{j} \in J$, or $v_{i} \in J$ and $v_{j} \in I$, and $\neg E_{A}(v_{i}, v_{j})$, and $1$ otherwise. For $\bar{i}, \bar{j}$ in $\omega^{n}$, define 

$$\eta(\bar{i}, \bar{j}) := \prod_{1 \leq \ell < k \leq n} \eta^{i_{k} j_{\ell}}_{k\ell}$$

for $\bar{i}=(i_{1}, \ldots, i_{n})$, $\bar{j}=(j_{1}, \ldots, j_{n})$.

Then define a $\mathbb{Q}$-algebra structure on $M$ by $X^{\bar{i}}\cdot X^{\bar{j}} = \eta(\bar{i}, \bar{j}) X^{\bar{i}+\bar{j}}$. Let us show that this $\mathbb{Q}$-algebra structure is associative. For this it suffices to show that for $\bar{i}, \bar{j}, \bar{k} \in \omega^{n}$, $(X^{\bar{i}}\cdot X^{\bar{j}}) \cdot X^{\bar{k}} = X^{\bar{i}} \cdot (X^{\bar{j}}\cdot X^{\bar{k}}) $. Computing each side, for this it suffices to show that $\eta(\bar{i}, \bar{j})\eta(\bar{i}+\bar{j}, \bar{k})X^{\bar{i}+\bar{j} + \bar{k}}= \eta(\bar{i}, \bar{j}+\bar{k})\eta(\bar{j}, \bar{k}) X^{\bar{i}+\bar{j} + \bar{k}}$. Equivalently, it suffices to show that the coefficients are equal: $\eta(\bar{i}, \bar{j})\eta(\bar{i}+\bar{j}, \bar{k})= \eta(\bar{i}, \bar{j}+\bar{k})\eta(\bar{j}, \bar{k}) $. But $\eta(\bar{x}, \bar{y})$ is multiplicative in both $\bar{x}$ and $\bar{y}$. So both sides are equal to $\eta(\bar{i}, \bar{j}) \eta(\bar{j}, \bar{k})\eta(\bar{i}, \bar{k})$, proving the desired equation, and thus the associativity of the $\mathbb{Q}$-algebra structure on $M$.

Now define a $\mathbb{Q}$-linear map from $\mathbb{Q}\langle\{ X_{v} \}_{v \in A}\rangle$ to $M$ sending an arbitrary monomial $X_{u_{1}} \ldots X_{u_{N}}$ for vertices $u_{1}, \ldots u_{N} \in A$ to the product $X_{u_{1}} \cdot \ldots \cdot X_{u_{N}}$ in $M$; this map is well-defined by associativity of the $\mathbb{Q}$-algebra structure defined on $M$ (and is also a map of $\mathbb{Q}$-algebras). Note that for distinct vertices $u, v \in A$, $X_{u} \cdot X_{v}=-X_{v}X_{u}$ if $v \in I$, $u \in J$, or $v \in J$ and $u \in I$, and $\neg E_{A}(v, u)$, and $X_{u} \cdot X_{v}=X_{v}X_{u}$ otherwise. So $V$ is contained in the kernel of this map, yielding a map from $R_{A} = \mathbb{Q}\langle\{ X_{v} \}_{v \in A}\rangle/V$ to $M$ (which is likewise a map of $\mathbb{Q}$-algebras). For $\bar{i} \in \omega^{n}$, the $V$-coset of $X^{\bar{i}}$ is mapped to $X^{\bar{i}}$, as desired.
\end{proof}

By Fact \ref{ordered basis}, $R_{A}$ is a domain. This is analogous to the case of polynomial rings. Fixing an enumeration $v_{1}, \ldots, v_{n}$ of $A$, let the \textit{degree} $\mathrm{deg}(s)$ of some nonzero $s=\sum q_{\bar{i}} X^{\bar{i}} \in R_{A}$, with $\bar{i}$ ranging over those $\bar{i} \in \omega^{n}$ with nonzero coefficient $q_{\bar{i}} \in \mathbb{Q}_{\neq 0}$, be the lexicographically greatest such $\bar{i}$. Then if $s, t \in R_{A}$ are nonzero, $\mathrm{deg}(st)=\mathrm{deg}(s) + \mathrm{deg}(t)$ is defined, so $st$ is nonzero and $R_{A}$ is a domain.

Commutative domains have fields of fractions, but this construction does not generalize to general noncommutative domains. However, the construction does generalize to the specific case of $R_{A}$, which does have a \textit{division ring of fractions}: in this context, a \textit{(right) division ring} of fractions for a noncommutative associative domain $R$ is a noncommutative $R$-algebra which

\begin{itemize}
    \item is a division ring, and
    \item is such that every element can be written in the form $rs^{-1}$, for $r \in R$, $s \in R \backslash \{0\}$.
\end{itemize}

See \cite{mcconnell1987noncommutative}, 2.1.3 for a more general definition of the right quotient ring (or localization) with respect to a multiplicative subset. A right division ring of fractions $Q$ of a noncommutative associative domain $R$ is universal among division rings extending $R$: every homomorphism of $R$ into a division ring extends uniquely to all of $Q$ (\cite{mcconnell1987noncommutative}, Lemma 2.1.4\footnote{This proof of this lemma is stated to be ``straightforward," but doesn't seem that way to us. For a homomorphism $\varphi: R \hookrightarrow Q'$ of $R$ into a division ring $Q$, and $Q$ a right division ring of fractions of $R$, we want to extend $\varphi$ to all of $Q$ by defining $\varphi(rs^{-1})=\varphi(r)\varphi(s)^{-1}$ for $r \in R$, $s \in R_{\neq 0}$; we must show this extension of $\varphi$ is a well-defined homomorphism. That $R$ has a right division ring of fractions implies that it satisfies the \textit{right Ore condition} (\cite{mcconnell1987noncommutative}, 2.1.6), namely that any two $a, b \in R_{\neq 0}$ have a common nonzero right multiple: in $Q$, $b^{-1}a = cd^{-1}$ for $c, d \in R_{\neq0}$, so $ad = bc$ is a common nonzero right multiple of $a$ and $b$. We can now show that the extension of $\varphi$ to $Q$ is well-defined: suppose $rs^{-1} = tu^{-1}$ for $r, t \in R$, $s, u \in R^{\neq 0}$. By the right Ore condition, find some nonzero $c, d \in R$ such that $sc = ud$. By $rc (sc)^{-1}=rs^{-1} = tu^{-1}= td(ud)^{-1}$, this implies that $rc=td$. So $\varphi(r)\varphi(s)^{-1} = \varphi(rc)\varphi(sc)^{-1}= \varphi(td)\varphi(ud)^{-1}=\varphi(t)\varphi(u)^{-1}$, as desired. Now to show that $\varphi$ is additive, any pair $rs^{-1}, tu^{-1}$ for $r, t \in R$, $s, u \in R^{\neq 0}$ can be written such that $s = u$ using the Ore condition, and additivity is straightforward in this case. Similarly, any pair $rs^{-1}, tu^{-1}$ for $r \in R$, $s, t, u \in R^{\neq 0}$ can also be written such that $s = t$ using the Ore condition, and multiplicativity is then straightforward. Uniqueness of the extension $\varphi$ is also routine, as in commutative algebra.}). By symmetry, a left division ring of fractions, where the second bullet point is replaced with the condition that every element can be written in the form $s^{-1}r$, for $r \in R$, $s \in R \backslash \{0\}$, also satisfies this universal property. So a right division ring of fractions is also a left division ring of fractions, and we can eliminate the adjectives ``left" and ``right"; moreover, every division ring of fractions of a noncommutative associative domain $R$ is unique up to unique isomorphism over $R$ (corollary to Lemma 2.1.4, \cite{mcconnell1987noncommutative}).

That $R_{A}$ has a quotient division ring is standard in the literature, and can be proven abstractly: by Fact 2.1, $R_{A}$ can be constructed from $\mathbb{Q}$ by an iterated series of \textit{skew polynomial rings} (\cite{mcconnell1987noncommutative}, 1.2.3), so is (left and right) Noetherian by repeated applications of the Hilbert basis theorem (\cite{mcconnell1987noncommutative}, 1.2.9.iv). (See also §2.6 of \cite{mcconnell1987noncommutative} for a discussion of skew polynomial rings in multiple variables.) Because $R_{A}$ is a right Noetherian domain, it satisfies the right Ore condition (\cite{mcconnell1987noncommutative}, 2.1.15; see the previous footnote). By \cite{mcconnell1987noncommutative}, 2.1.12, that $R_{A}$ satisfies the right Ore condition implies that $R_{A}$ has a division ring of fractions. (In fact, by \cite{mcconnell1987noncommutative}, 2.1.6, having a division ring of fractions is equivalent to the right Ore condition; see the previous footnote.) However, we will present a more self-contained exposition of the proof using Laurent series that $R_{A}$ has a division ring of fractions; while we will have to review the proofs of the Noetherianity and the Ore condition from \cite{mcconnell1987noncommutative}, using Laurent series will simplify the hardest part of the proof, showing the existence of a division ring of fractions for $R_{A}$.

We saw in Fact \ref{ordered basis} that for $v_{1}, \ldots, v_{n}$ a fixed enumeration of the vertices of a finite bipartite graph $A$, $R_{A}$ has basis $X^{\bar{i}} =: X_{v_{1}}^{i_{1}} \ldots X_{v_{n}}^{i_{n}}$ for $\bar{i} \in \omega^{n}$, with multiplication given by $X^{\bar{i}}X^{\bar{j}}= \eta(\bar{i}, \bar{j})X^{\bar{i}+\bar{j}}$. This multiplication can be extended to the skew Laurent polynomial ring with basis $X^{\bar{i}} =: X_{v_{0}}^{i_{1}} \ldots X_{v_{n}}^{i_{n}}$ for $\bar{i} \in \mathbb{Z}^{n}$, where multiplication is given by the same formula $X^{\bar{i}}X^{\bar{j}}= \eta(\bar{i}, \bar{j})X^{\bar{i}+\bar{j}}$. It can then be completed  to the (iterated) skew Laurent series ring consisting of series $\sum_{\bar{i} \in \mathbb{Z}^{n}} q_{\bar{i}}X^{\bar{i}}$ with coefficients $q_{\bar{i}} \in \mathbb{Q}$, where the condition on which $q_{\bar{i}}$ may be nonzero depends on the enumeration on the vertices, and is defined inductively: as the base case, the constants from $\mathbb{Q}$ are in the Laurent series ring. And for $k < n$, having defined which series with $X_{v_{i}}$ only appearing for $i \leq k$ are in the Laurent series ring, we define which series with $X_{v_{i}}$ only appearing for $i \leq k+1$ are in the Laurent series ring: these are just those series where the exponents of $X_{v_{k+1}}$ have a lower bound, and the coefficient of each power of $X_{v_{k+1}}$, a series with $X_{v_{i}}$ only appearing for $i \leq k$, is in the Laurent series ring. This condition implies that multiplication of these series is well-defined according to distributivity, as with usual iterated Laurent series rings. (See, say, \cite{Xin2004} for some prior discussion in the literature on iterated laurent series rings.) For an induced subgraph $B \subset A$, $R_{B}$ embeds canonically into $R_{A}$ with basis those $X^{\bar{i}}$ with $(i_{1}, \ldots i_{n}) = \bar{i} \in \omega^{n}$ where $i_{k} =0$ for each $ v_{k} \in A \backslash B$. (As an embedding of $R_A$ into $R_B$, this does not depend on how the vertices are enumerated; note again that the Laurent series construction, on the other hand, depends on the enumeration of the vertices.) And the ring formed by applying the skew Laurent series construction to $B$ (with the induced enumeration) embeds canonically as an $R_{B}$-algebra into the ring formed by applying this construction to $A$, as the ring of elements $\sum_{\bar{i}\in \mathbb{Z}} q_{\bar{i}}X^{\bar{i}}$ where for $(i_{1}, \ldots i_{n}) = \bar{i} \in \mathbb{Z}^{n}$, $q_{\bar{i}}$ is nonzero only if $i_{k} =0$ for $ v_{k} \in A \backslash B$.

Just as \cite{cohn1995skew} observes for skew Laurent series rings in one variable, that the skew Laurent series ring corresponding to $A$ is a division ring is just as in the commutative case. By induction on the size of $A$, the skew Laurent series ring corresponding to $A \backslash \{{v_{n}} \}$ is a division ring. So in the skew Laurent series ring for $A$, every nonzero element can be written as a product of a unit of the form $rX^{k}_{n}$, where $r$ is in the skew Laurent series ring for $A \backslash \{{v_{n}} \}$, and an element of the form $1 - q$, where every monomial of $q$ with nonzero coefficient has positive exponent for $X_{n}$. It suffices to show $1 - q$ is invertible. But $\sum^{\infty}_{\ell=0} q^{\ell}$ is well-defined: the coefficient for each $X^{n}$ will be an element of the skew Laurent series ring for $A \backslash \{{v_{n}} \}$ consisting of a sum of elements of that ring taken from the expansions of finitely many of the $q^{\ell}$. As in the standard argument from commutative algebra, this is an inverse for $1 - q$. So this skew Laurent series ring is a division ring.

We next show that $R_{A}$ is left noetherian (there is no strictly ascending chain of left ideals; this is the Hilbert basis theorem (\cite{mcconnell1987noncommutative}, 1.2.9.iv)); by symmetry, $R_{A}$ will also be right noetherian. By induction, $R_{A \backslash \{v_{n}\}}$ is noetherian; we view elements of $R_{A}$ as polynomials $\sum^{N}_{k=1} q_{k}X^{k}_{n}$ with $q_{k} \in R_{A \backslash \{v_{n}\}}$. Let $I_{1} \subseteq \ldots \subseteq I_{i} \subseteq I_{i+1} \subseteq \ldots$ be an ascending chain of ideals. From how the multiplication is defined in $R_{A}$, for each fixed $i$ the sets of leading coefficients $q_{N}$ of elements $\sum^{N}_{k=1} q_{k}X^{k}_{n} \in I_{i}$ of degree $N$ as a polynomial in $X_{n}$ form an ascending chain $I^{1}_{i} \subseteq \ldots \subseteq I^{N}_{i} \subseteq I^{N+1}_{i} \subseteq \ldots$ of left ideals of $R_{A \backslash \{v_{n}\}}$ in the index $N$. By Noetherianity of $R_{A \backslash \{v_{n}\}}$, the ascending chain $I^{1}_{1} \subseteq \ldots \subseteq I^{i}_{i} \subseteq I^{i+1}_{i+1} \subseteq \ldots$ terminates, say at $i = \ell$. Then for $i \geq \ell$, $I^{1}_{i} \subseteq \ldots \subseteq I^{N}_{i} \subseteq I^{N+1}_{i} \subseteq \ldots$ terminates at $N = \ell$. And $I^{\ell}_{1} \subseteq \ldots \subseteq I^{\ell}_{i} \subseteq I^{\ell}_{i+1} \subseteq \ldots$ terminates at $i = \ell$. Let us choose $i \geq \ell$ large enough such that, for the finitely many ascending chains $I^{0}_{1} \subseteq \ldots \subseteq I^{0}_{i} \subseteq I^{0}_{i+1} \subseteq \ldots; I^{1}_{1} \subseteq \ldots \subseteq I^{1}_{i} \subseteq I^{1}_{i+1} \subseteq \ldots; \ldots ; I^{j}_{1} \subseteq \ldots \subseteq I^{j}_{i} \subseteq I^{j}_{i+1} \subseteq \ldots ; \ldots ; I^{\ell-1}_{1} \subseteq \ldots \subseteq I^{\ell-1}_{i} \subseteq I^{\ell-1}_{i+1} \subseteq \ldots$, these ascending chains also terminate at $i$. We then conclude that $I^{j}_{i}= I^{j}_{i+1}$ for all $j \geq 0$. But this implies $I_{i}=I_{i+1}$: choose $q \in I_{i+1} \backslash I_{i}$ of least degree in $X_{n}$. Then there is $r \in I_{i}$ with the same leading coefficient. Then $q - r \in I_{i+1} \backslash I_{i}$, but $q-r$ has degree in $X_{n}$ less than that of $q$, a contradiction. So $I_{i}=I_{i+1}$, and we conclude that the ascending chain $I_{1} \subseteq \ldots \subseteq I_{i} \subseteq I_{i+1} \subseteq \ldots$ terminates.

But any right Noetherian domain satisfies the right Ore condition, as shown in \cite{mcconnell1987noncommutative}, 2.1.15: any two nonzero $a, b$ have a common nonzero right multiple. This is proven as follows: by Noetherianity, there is some $N$ such that the right ideal generated by the first $N$ terms of the sequence $\{b^{i}a\}^{\infty}_{i=0}$ is equal to the right ideal generated by the first $N+1$ terms. Then $b^{N}a= \sum^{N-1}_{i =0} b^{i}ar_{i}$, for some $r_{i}$. Since $b$ and $a$ are nonzero, $b^{N}a$ is nonzero, so one of the $r_{i}$ for $0 \leq i \leq N-1$ is nonzero. Let $i^{*}$ be the least $i$ for which $r_{i}$ is nonzero. Then $b^{i^{*}}a r_{i} = b^{N}a + \sum^{N-1}_{i =i^{*}+1} b^{i}ar_{i}$, and by cancellativity, $a r_{i} = b^{N-i^{*}}a + \sum^{N-1}_{i =i^{*}+1} b^{i-i^{*}}ar_{i}$. But the left side is a nonzero right multiple of $a$, and the right side is a nonzero right multiple of $b$, as desired.

Then $R_{A}$ has a fraction division ring: within the skew Laurent series ring for $A$, we show that the set of $rs^{-1}$ for $r \in R_{A}$ and nonzero $s \in R_{A}$ suffices. Since the skew Laurent series ring is in fact a division ring, it remains to show that this set is actually a subring. (It will then be a division ring because $rs^{-1}$ for nonzero $r, s \in R_{A}$ will have inverse $sr^{-1}$ in this set, noting that $r^{-1}$ is well-defined because the Laurent series ring is a division ring.) But using the right Ore condition, we can write any pair $rs^{-1}, tu^{-1}$ for $r, t \in R_{A}$ and nonzero $s, u \in R_{A}$ so that $s = u$, from which it follows that this set closed under addition, and we can also write any pair $rs^{-1}, tu^{-1}$ for $r \in R_{A}$ and nonzero $t, s, u \in R_{A}$ so that $s = t$, from which it follows that this set is closed under multiplication. Thus $R_{A}$ has a fraction division ring for finite $A$, and by the universal property it therefore has a fraction division ring for infinite $A$. Let $Q_{A}$ denote the fraction division ring of $R_{A}$.

Our Laurent series construction allows us to prove the following proposition:

\begin{prop}
    Let $C$ be a finite bipartite graph, and $A, B \subseteq C$ induced subgraphs with $C = A \cup B$. Then within $Q_{C}$, $Q_{A} \cap Q_{B} =Q_{A \cap B}$.
\end{prop}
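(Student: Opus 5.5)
The plan is to work inside the skew Laurent series division ring $L_C$ attached to $C$ for a well-chosen enumeration of the vertices, and to read off which variables occur in a series. The inclusion $Q_{A\cap B}\subseteq Q_A\cap Q_B$ is immediate from the canonical embeddings. For the reverse inclusion I would show that, for any induced subgraph $D\subseteq C$, an element of $Q_D$, viewed in $L_C$, is a series in which only monomials $X^{\bar i}$ with $i_k=0$ for $v_k\notin D$ occur. Given this, an element of $Q_A\cap Q_B$ is a Laurent series supported on monomials using only variables from $A$ and only variables from $B$, hence only from $A\cap B$. It then remains to see that such an element lies in $Q_{A\cap B}$ rather than merely in the Laurent series ring of $A\cap B$.

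For the support claim, order the vertices so that those of $D$ come first, say $v_1,\dots,v_m$ in $D$ and $v_{m+1},\dots,v_n$ outside $D$. The paper already notes that the Laurent series ring $L_D$ of $D$ (with the induced enumeration) embeds canonically into $L_C$ as the series supported on $D$-monomials. Since $L_D$ is a division ring containing $R_D$, the fraction ring $Q_D$ embeds into it. By the universal property of the fraction division ring, this embedding agrees with $Q_D\subseteq Q_C\subseteq L_C$. So $Q_D$ consists of series with $D$-support.

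The main obstacle is that the Laurent series ring depends on the enumeration, so one enumeration cannot put both $A$ first and $B$ first. I would first try the enumeration $A\cap B$, then $A\setminus B$, then $B\setminus A$. With this ordering $A$ is an initial segment, and I expect $B$ is harmless as well: inverting $1-q$ in the iterated construction only ever uses variables already occurring, so the inverse in $L_C$ of a nonzero element of $R_B$ is a series in $B$-variables. This is because the inductive inversion (factor out a leading unit $rX_k^{\ell}$ and expand a geometric series) stays inside the closed subring of series supported on $B$-monomials. So I would prove the lemma that for any induced $D$ and any enumeration, the set of series in $L_C$ supported on $D$-monomials is a sub-division-ring, by induction on the number of vertices following the division-ring argument. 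Then $Q_A$ and $Q_B$ both land in series with the respective supports.

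Finally, I need to show that an element $x\in Q_A$ whose series is supported on $(A\cap B)$-monomials lies in $Q_{A\cap B}$. Here I would use the ordering with $A\cap B$ first and write $x=rs^{-1}$ with $r,s\in R_A$. Expanding $r$ and $s$ as polynomials in the variables of $A\setminus B$, with coefficients in $R_{A\cap B}$, take $s$'s lowest term in the $A\setminus B$ variables (lexicographically, using the Laurent grading). Comparing $xs=r$ term by term, $x$ has $(A\cap B)$-support and multiplication by monomials just twists by signs, so each homogeneous $(A\setminus B)$-component satisfies $x s_{\bar j}=r_{\bar j}$ up to the sign twists. Picking any nonzero component gives $x=r_{\bar j}s_{\bar j}^{-1}$ (adjusted by a monomial sign) with $r_{\bar j},s_{\bar j}\in R_{A\cap B}$, so $x\in Q_{A\cap B}$. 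The sign bookkeeping of $X^{\bar i}X^{\bar j}=\eta(\bar i,\bar j)X^{\bar i+\bar j}$ is routine, since $x$ commutes with $X^{\bar j}$ up to an automorphism of $L_{A\cap B}$ that preserves $R_{A\cap B}$.
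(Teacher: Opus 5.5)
Your argument is correct, but its final step takes a different route from the paper.

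Both proofs put $A\cap B$ first in the enumeration. Both then observe that $q\in Q_A\cap Q_B$, viewed inside $L_C$, is a series supported on $(A\cap B)$-monomials. The paper simply asserts this. You justify it: for any induced $D$ and any enumeration, the $D$-supported series form a sub-division ring (the copy of $L_D$ with the induced enumeration), so $Q_D$ lands there by the universal property.

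The two proofs then pass from $L_{A\cap B}$ down to $Q_{A\cap B}$ differently.

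\emph{The paper's route.} It takes Laurent series in the variables outside $A\cap B$ with coefficients in $Q_{A\cap B}$. It reruns the division-ring argument to see that this ring is a division ring containing $R_C$, hence containing $Q_C$. Since $q$ is a constant term, it follows that $q\in Q_{A\cap B}$.

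\emph{Your route.} You clear denominators. Write $x=rs^{-1}$ with $r,s\in R_A$, and expand $r=\sum_{\bar j}r_{\bar j}X^{\bar j}$ and $s=\sum_{\bar j}s_{\bar j}X^{\bar j}$ over $(A\setminus B)$-monomials with left coefficients in $R_{A\cap B}$. The products $(xs_{\bar j})X^{\bar j}$ have disjoint supports, so comparing coefficients gives $xs_{\bar j}=r_{\bar j}$. Hence $x=r_{\bar j}s_{\bar j}^{-1}$ for any $\bar j$ with $s_{\bar j}\neq 0$. Two remarks on your write-up:
\begin{itemize}
\item No sign twist actually occurs. With left coefficients and $A\cap B$ first, $\eta(\bar i,\bar j)=1$ whenever $\bar i$ is supported on $A\cap B$ and $\bar j$ on $A\setminus B$.
\item Choosing the lowest term is unnecessary; any nonzero component works.
\end{itemize}

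\emph{What each buys.} Your route is more elementary: it uses only finite expansions. It also avoids checking that the mixed series ring over $Q_{A\cap B}$ is closed under multiplication (which needs the sign automorphisms to preserve $Q_{A\cap B}$) and is a division ring. The paper's route gives a stronger structural fact: every element of $Q_C$ has all its outer-variable coefficients in $Q_{A\cap B}$.
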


\begin{proof}
    Let $v_{1}, \ldots, v_{n}$ be a fixed enumeration of the vertices of $C$; we may assume that $A \cap B $ forms an initial segment. Let $q \in Q_A \cap Q_B $; we show that $q \in Q_{A\cap B}$. Construct the skew Laurent series ring for $C$; within this ring, $q$ is in the skew Laurent series ring for $A\cap B$. It suffices to show that elements of $Q_{C}$, viewed as Laurent series in the $X_{i}$ for $v_{i} \notin A \cap B$ with coefficients in the skew Laurent series ring for $A \cap B$, even have those coefficients in $Q_{A \cap B}$; because $q$ is a constant term, this implies $q \in Q_{A \cap B}$. But the subring of the ring of  skew Laurent series for $C$, consisting of Laurent series in the $X_{i}$ for $v_{i} \notin A \cap B$ with coefficents in $Q_{A\cap B}$, is itself a division ring: this is the same as the above proof that the skew Laurent series ring for a bipartite graph is a division ring, replacing $\mathbb{Q}$ with $Q_{A\cap B}$. This division ring contains $R_{C}$, so by the universal property it contains $Q_C$.

\end{proof}

\begin{cor}\label{least support}

Let $A$ be a bipartite graph. Then every $q\in Q_{A}$ has a \emph{least support}: some finite $C \subseteq A $ such that $q\in Q_C$, and such that any $C' \subseteq A$ with $q\in Q_{C'}$ contains $C$. 

\end{cor}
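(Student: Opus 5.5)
The idea is to first show that $q$ lies in some $Q_{C_0}$ with $C_0$ finite, and then take the intersection of all finite supports contained in $C_0$; the Proposition just proved guarantees that this intersection is itself a support.

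For infinite $A$, $Q_A$ is obtained via the universal property as the fraction division ring of $R_A$. I would note that the union $\bigcup_{F} Q_F$, over finite induced subgraphs $F\subseteq A$, is a directed union: for $F\subseteq F'$ the map $Q_F\to Q_{F'}$ is the canonical embedding induced by $R_F\subseteq R_{F'}$, again by the universal property. A directed union of division rings is a division ring, and this one contains $R_A$, with every element of the form $rs^{-1}$. It is therefore a division ring of fractions of $R_A$, so by uniqueness it equals $Q_A$. Hence every $q\in Q_A$ lies in $Q_{C_0}$ for some finite $C_0\subseteq A$. The same reasoning shows that for any $C'\subseteq A$, $Q_{C'}$ is the directed union of the $Q_F$ with $F\subseteq C'$ finite, and sits inside $Q_A$ compatibly.

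Next, among the finite subsets $D\subseteq C_0$ with $q\in Q_D$, which include $C_0$ itself, I would choose one, $C$, of minimal size. To see that $C$ is least, let $C'\subseteq A$ with $q\in Q_{C'}$. By the previous paragraph, $q\in Q_F$ for some finite $F\subseteq C'$. Put $G=C\cup F$, a finite induced subgraph with $C,F\subseteq G$. The Proposition applied to $G=C\cup F$ gives $Q_C\cap Q_F=Q_{C\cap F}$ inside $Q_G$, so $q\in Q_{C\cap F}$. Since $C\cap F\subseteq C\subseteq C_0$, minimality of $|C|$ forces $C\cap F=C$. Therefore $C\subseteq F\subseteq C'$.

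The main obstacle is bookkeeping rather than substance: the intersection $Q_C\cap Q_F$ must be computed in $Q_G$, and the result transported to $Q_A$. This requires knowing that the canonical maps $Q_G\to Q_A$ are injective, which holds because ring homomorphisms out of division rings are injective, and that they are compatible with the inclusions $Q_C,Q_F\subseteq Q_G$. Both facts follow from uniqueness in the universal property. I would also make explicit that the Proposition, stated for finite $C$ with $C=A\cup B$, is applied exactly to the finite graph $G$.
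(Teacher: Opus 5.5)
Your proof is correct and takes the same route as the paper. Both first reduce to finite supports and then shrink supports using the Proposition $Q_C\cap Q_F=Q_{C\cap F}$ inside $Q_{C\cup F}$. Your version improves on the paper's sketch in two details: you take a finite support of minimal size inside $C_0$ and apply the Proposition once, whereas the paper intersects all finite supports (tacitly a finite intersection needing iterated use of the Proposition), and you justify the reduction to finite supports via the directed-union description of $Q_A$, which the paper only asserts.
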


\begin{proof}
    Note that if $q \in Q_{C}$, $q \in Q_{C_0}$ for some finite $C_0 \subseteq C$.  So it suffices to find some finite $C \subseteq A $ such that $q\in Q_C$, and such that any \textit{finite} $C' \subseteq A$ with $q\in Q_{C'}$ contains $C$. By the previous proposition, $q \in Q_{C}$, where $C$ is the intersection of all finite $C'$ such that $q\in C'$. Then $C$ is as desired.
\end{proof}

\subsection{Modules over division rings}\label{modules over division rings}

Let $Q$ be a division ring, and $V$ a, say, left $Q$-module. Then similarly to the case where $Q$ is a commutative field and $V$ a $Q$-vector space, every linearly independent set of $V$ can be extended to a basis. To see how this well-known fact follows just as in commutative field case, suppose $v_{1}, \ldots, v_{n}$ is linearly independent but $v_{1}, \ldots, v_{n}, v_{n+1}$ is dependent; we show that $v_{n+1}$ is in the span of $v_{1}, \ldots, v_{n}$. By linear independence of $v_{1}, \ldots, v_{n}$, the linear relation $q_{n+1}v_{n+1} = \sum^{n}_{i=1} q_{i}v_{i}$ for $q_{i} \in Q$ with some nonzero $q_{i}$ has $q_{n+1} \neq 0$. Dividing, $v_{n+1} \in \mathrm{span}( v_{1}, \ldots, v_{n})$, as desired. So every maximal linearly independent set spans, and therefore every linearly independent set can be extended to a basis. The same discussion applies to right modules over division rings.

We give equivalent conditions for a closure relation on modules over division rings, similar to the well-known case of vector spaces over algebraically closed fields:

\begin{prop}\label{strong inclusion}
    Let $Q \subseteq Q'$ be division rings with $Q$ a subring of $Q'$, and $M \subseteq M'$ with $M$ a left $Q$-module and $M'$ a left $Q'$-module, such that $M$ is a submodule of $M'$ with the induced $Q$-module structure. Then the following are equivalent:

    \begin{itemize}
        \item Every linearly independent set in $M$ over $Q$ remains linearly independent over $Q'$.

        \item Some basis for $M$ over $Q$ remains linearly independent over $Q'$.
    \end{itemize}
\end{prop}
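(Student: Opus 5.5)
The first bullet implies the second immediately, since by the preceding discussion $M$ has a basis over $Q$, and any basis is in particular a linearly independent set. The work is in the converse. I will fix a basis $\{e_{\alpha}\}_{\alpha}$ of $M$ over $Q$ that remains linearly independent over $Q'$, and let $w_{1}, \ldots, w_{m} \in M$ be linearly independent over $Q$. The goal is to show that a relation $\sum_{k} q'_{k} w_{k} = 0$ with $q'_{k} \in Q'$ forces all $q'_{k}=0$. Linear independence of an arbitrary set is a statement about its finite subsets, so finite families suffice.

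I will expand each $w_{k} = \sum_{\alpha} c_{k\alpha} e_{\alpha}$ with $c_{k\alpha} \in Q$, where only finitely many $\alpha$ occur overall, say $\alpha_{1}, \ldots, \alpha_{N}$. Substituting and using that the $Q$-module structure on $M$ is the restriction of the $Q'$-module structure on $M'$, the relation becomes $\sum_{j}\bigl(\sum_{k} q'_{k} c_{k\alpha_{j}}\bigr) e_{\alpha_{j}} = 0$ in $M'$. Since the $e_{\alpha}$ are independent over $Q'$, this gives $\sum_{k} q'_{k} c_{k\alpha_{j}} = 0$ for each $j$. So the row vector $(q'_{1}, \ldots, q'_{m}) \in (Q')^{m}$ is a left solution of the linear system given by the $m \times N$ matrix $C = (c_{kj})$ with entries in $Q$.

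Meanwhile, independence of the $w_{k}$ over $Q$ says exactly that the only row vector $x \in Q^{m}$ with $xC = 0$ is zero. Equivalently, the $m$ rows of $C$ are left linearly independent in the left $Q$-module $Q^{N}$. The key step is to transfer this to $Q'$. I plan to do this by Gaussian elimination over the division ring $Q$. Since the rows are independent, the standard argument from the preceding paragraph, which extends a linearly independent set to a basis, shows that $C$ has left row rank $m$. Row-reducing $C$ by invertible row operations over $Q$ (left multiplication by an invertible $P \in GL_{m}(Q)$), together with a choice of $m$ pivot columns, produces an $m\times m$ submatrix of $PC$ equal to the identity.

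Then $q' C = 0$ implies $(q' P^{-1})(PC) = 0$, and restricting to the pivot columns gives $q'P^{-1} = 0$, hence $q' = 0$. The whole argument works over $Q'$ because $P$ and its inverse have entries in $Q \subseteq Q'$.

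The main obstacle is carrying out the elimination carefully in the noncommutative setting. I need to keep track of sides: row operations are left multiplication, and the scalars $q'_{k}$ multiply on the left of the entries $c_{kj}$. I also need to check that the rank statement holds for the row module. I would verify this directly by induction on $m$: pick a nonzero entry in the first row, scale the row on the left to make that entry $1$, and clear that column in the other rows. Clearing preserves row independence over $Q$, and I then recurse on the remaining rows.
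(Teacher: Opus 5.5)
Your proof is correct, but it takes a different route from the paper's.

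\textbf{The paper's route.} It works with the tensor product $Q'\otimes_Q M$. It fixes a right basis $\langle q_j\rangle$ of $Q'$ over $Q$. It then uses the $Q'$-independence of the given basis of $M$ to show that the multiplication map $Q'\otimes_Q M\to M'$ is injective. Finally, it shows that $\sum_{i,j} q_j\otimes r_{ij}m'_i\neq 0$ by evaluating the balanced functional $q\otimes m\mapsto \ell_{Q'}(q)\ell_M(m)$. This functional is built from a right $Q$-linear coordinate functional for $\langle q_j\rangle$ and a left $Q$-linear one for an extension of $\langle m'_i\rangle$ to a basis of $M$.

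\textbf{Your route.} You pass to the finite coefficient matrix $C$ over $Q$. You observe that $Q$-independence of the $w_k$ says exactly that the rows of $C$ are left independent in $Q^N$. You then row-reduce over $Q$ to get $P\in GL_m(Q)$ such that $PC$ has an $m\times m$ identity submatrix. Since $P$, $P^{-1}$ and $C$ all have entries in $Q\subseteq Q'$, this kills any $q'\in(Q')^m$ with $q'C=0$.

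\textbf{Comparison.} Your argument is more elementary and entirely finitary. It needs no basis of $Q'$ over $Q$ and no check that a functional is well defined on a tensor product. It also makes transparent that independence over a division ring is witnessed by an invertible square submatrix with entries in $Q$, which survives any extension of scalars. The paper's argument is more conceptual: in effect it identifies $\mathrm{span}_{Q'}(M)$ with $Q'\otimes_Q M$ and uses that $Q'$ is free as a right $Q$-module.

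\textbf{A minor point.} If your recursion clears each new pivot column only in the rows below it, the pivot submatrix is unitriangular rather than the identity. That matrix is still invertible over $Q$, so the conclusion is unchanged; alternatively, clear in all rows to get the identity.
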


\begin{proof}

($\Rightarrow$) Immediate from the observation that bases exist.

($\Leftarrow$) Suppose some basis $\langle m_{i}\rangle_{i \in I}$ for $M$ over $Q$ remains linearly independent over $Q'$. Viewing $Q'$ as a \textit{right} module over $Q$, construct the tensor product $Q' \otimes_{Q} M$, which for our purposes is the additive abelian group generated by symbols $q \otimes m$ for $q \in Q'$, $m \in M$ modulo the relations $(q + q') \otimes m - (q \otimes m + q' \otimes m), q\otimes (m+m') - (q \otimes m + q \otimes m'), qr \otimes m - q \otimes rm $ for $q, q' \in Q'$, $m, m' \in M$, $r \in Q$. Now choose a (right) basis $\langle q_{j}\rangle_{j \in J}$ for $Q'$ over $Q$. Because $\langle m_{i}\rangle_{i \in I}$ is linearly independent over $Q'$, every element of $\mathrm{span}_{Q'}(M)$'s representation (as a sum with finitely  many nonzero terms) of the form $\sum_{i \in I, j\in J}q_{j}r_{ij}m_{i}$ for some $r_{ij} \in Q$ is unique. (Otherwise, subtracting, there is some relation $\sum_{i \in I} e_{i} m_{i} =0 $ where $e_{i}$ is an expression of the form $\sum_{j \in J} q_{j}r'_{ij}$ for some $r'_{ij} \in Q$, and for some $i$ not all of the $r'_{ij}$ are zero. But then $Q' \ni e_{i} = \sum_{j \in J} q_{j}r'_{ij} \neq 0$. This contradicts linear independence of $\langle m_{i}\rangle_{i \in I}$ over $Q'$.) This implies that the scalar multiplication map $\mathrm{mult}: Q' \otimes_{Q} M \to M$ is injective: suppose $\mathrm{mult}(v) = \mathrm{mult}(w)$. Write $ v =\sum_{i \in I, j\in J}q_{j}\otimes r_{ij}m_{i}$ and $ w =\sum_{i \in I, j\in J}q_{j}\otimes r'_{ij}m_{i}$ for $r_{ij}, r'_{ij} \in Q$. Then $\sum_{i \in I, j\in J}q_{j}r_{ij}m_{i}=\mathrm{mult}(v) = \mathrm{mult}(w)=\sum_{i \in I, j\in J}q_{j}r'_{ij}m_{i}$. So $r_{ij}=r'_{ij}$ for all $i \in I, j \in J$, and $v =\sum_{i \in I, j\in J}q_{j}\otimes r_{ij}m_{i}= \sum_{i \in I, j\in J}q_{j}\otimes r'_{ij}m_{i} = w $.

Now let $\langle m'_{i}\rangle_{i \in I'}$ be a linearly independent subset of $M$ over $Q$; we show that $\langle m'_{i}\rangle_{i \in I'}$ remains linearly independent over $Q'$. Suppose $ I'' \subseteq I'$ is a finite set and $e_{i} \in Q'_{\neq 0}$ for $i \in I''$; what we want to show is that $\sum_{i \in I''} e_{i}m'_{i} \neq 0$. For each $i \in I''$, write $e_{i} = \sum_{j \in J}q_{j} r_{ij}$ with $r_{ij} \in Q$.  We show $\sum_{i \in I'', j \in J} q_{j}r_{ij}m'_{i} = \sum_{i \in I''} e_{i}m'_{i} \neq 0$. Since $\sum_{i \in I'', j \in J} q_{j}r_{ij}m'_{i} = \mathrm{mult}(\sum_{i \in I'', j \in J} q_{j} \otimes r_{ij} m'_{i})$ and $\mathrm{mult}$ is injective, it suffices to show $\sum_{i \in I'', j \in J} q_{j} \otimes r_{ij} m'_{i} \neq 0$. Choose any $i^{*} \in I''$, so $e_{i^{*}} = \sum_{j \in J} q_{j}r_{i^{*}j} \neq 0$; there is then some $j^{*} \in J$ such that $r_{i^{*}j^{*}} \neq 0$. There is a right $Q$-linear map $\ell_{Q'}: Q' \to Q$ such that $\ell_{Q'}(q_{j^{*}}) = 1$ and $\ell_{Q'}(q_{j}) = 0$ for $j \in J \backslash \{j^{*}\}$; similarly, there is a left $Q$-linear map $\ell_{M}: M \to Q$ such that $\ell_{M}(m'_{i^{*}}) = 1$ and $\ell_{Q'}(m'_{i}) = 0$ for $i \in I \backslash \{i^{*}\}$. Then let $\ell: Q'\otimes_{Q} M \to Q$ be the additive map with $\ell(q \otimes m)=\ell_{Q'}(q)\ell_{M}(m)$ for $q \in Q'$, $m \in M$. Then $\ell(\sum_{i \in I'', j \in J} q_{j} \otimes r_{ij} m'_{i})=r_{i^{*}j^{*}} \neq 0$, and $\sum_{i \in I'', j \in J} q_{j} \otimes r_{ij} m'_{i} \neq 0$, as desired.

\end{proof}

We use $(Q, M) \leq (Q', M')$ to denote $Q \subseteq Q'$, $M \subseteq M'$ satisfying the equivalent conditions of this proposition.

\section{The theory $T^{\neg\mathrm{StFk}}$}\label{Tnegstfk}

Our counterexample to the stable forking conjecture will be the theory of a module of infinite dimension over a noncommutative division ring, but the language is rather sensitive. The noncommutativity is at the heart of the instability of the forking relation, as will become clear. One can see with these requirements that the language must not interpret the ring operations, since by \cite{pillay1998supersimple} in the language of rings every division ring which has a simple theory is commutative. 

Let $\mathcal{L}$ be the language in sorts $\Gamma, V$ with

\begin{itemize}
\item Unary predicates $I, J$ on $\Gamma$.

    \item A binary function symbol $+: V \times V \to V$, and unary function symbols $q \cdot -: V \to V$ for $q \in \mathbb{Q}$

    \item A binary relation symbol $E$ on $\Gamma \times \Gamma$

    \item A binary function symbol $\cdot: \Gamma \times V \to V$.
\end{itemize}

Let $A = I_{A} \sqcup J_{A}$ be a finite bipartite graph with edge relation $E_{A}$ as in the previous section. We associate any $Q_{A}$-module $M$ with an $\mathcal{L}$-structure as follows: interpret the sort $\Gamma$ as $A$, and the sort $V$ as $M$. Interpret $+$ as addition on $M$ and $q \cdot -$ for $q \in \mathbb{Q}$ as scalar multiplication on $M$ by elements of $\mathbb{Q}$, $I$ and $J$ as $I_{A}$ and $J_{A}$, and $E$ as the graph relation $E_{A}$ on $A$. Finally, interpret $\cdot$ as the action $(v, m) \mapsto X_{v}m$.

Then we can recover the $Q_{A}$-module structure on $M$ from this $\mathcal{L}$-structure canonically: by the definition of $R_{A}$ and of the division ring of fractions, any element of $Q_{A}$ will be of the form $p(X_{v_{1}}, \ldots, X_{v_{n}})(q(X_{w_{1}}, \ldots, X_{w_{k}}))^{-1}$ where $v_{i}, w_{i} \in A$ and $p(\xi_{1}, \ldots \xi_{n}), q(\zeta_{1}, \ldots \zeta_{k})$ are polynomial expressions in the variables $\xi_{i}$ and in the variables $\zeta_{i}$ (say, $\mathbb{Q}$-linear combinations of monomials in the $\xi_{i}$ and in the $\zeta_{i}$, where those monomials are arbitrary expressions of the form $\xi^{k_{1}}_{i_{1}} \ldots \xi^{k_{\ell}}_{i_{\ell}}$ for $k_{i} \in \mathbb{Z}_{\geq 0}$.) Then addition of scalars in $Q_{A}$ corresponds to addition of their scalar multiplication maps, and similarly for scalar multiplication of scalars in $Q_{A}$ by elements of $\mathbb{Q}$; multiplication of scalars in $Q_{A}$ corresponds to composition of their scalar multiplication maps; and multiplicative inversion of scalars in $Q_{A}$ corresponds to inversion of their scalar multiplication maps. So there is an $\mathcal{L}$-formula $\varphi(x_{1}, \ldots x_{n}; z_{1}, \ldots z_{k}; u, u')$ depending only on $p(\xi_{1}, \ldots \xi_{n}), q(\zeta_{1}, \ldots \zeta_{k})$ such that $p(X_{v_{1}}, \ldots, X_{v_{n}})(q(X_{w_{1}}, \ldots, X_{w_{k}}))^{-1}m=m'$ exactly when $\models \varphi(v_{1}, \ldots v_{n}; w_{1}, \ldots w_{k}; m, m')$ for $v_{1}, \ldots v_{n}, w_{1}, \ldots w_{k}\in \Gamma$, $m, m' \in V$. We thus recover the $Q_{A}$-module structure on $M$ from this $\mathcal{L}$-structure. This justifies the following terminology: when we speak of a $Q_{A}$-module $M$ considered in the language $\mathcal{L}$, we mean this $\mathcal{L}$-structure with domain consisting of sorts $A$ and $M$. This is despite that, literally speaking, these sorts $A$ and $M$ are not the same pair of sorts as \textit{the scalar field $Q_{A}$} and the module $M$.

Let $A \subseteq B$ be bipartite graphs with $A$ an induced subgraph of $B$, and $M \subseteq N$ with $M$ a $Q_{A}$-module, $N$ a $Q_{B}$-module, and $M$ a submodule of $N$ considered as a $Q_{A}$-module. Then let $(A, M) \subseteq (B, N)$ denote the inclusion of $\mathcal{L}$-structures where $(A, M)$ is the $Q_{A}$-module $M$ considered as an $\mathcal{L}$-structure and $(B, N)$ is the $Q_{B}$-module $N$ considered as an $\mathcal{L}$-structure. We let $(A, M) \leq (B, N)$ denote $(Q_{A}, M) \leq (Q_{B}, N)$ as in the previous section. (Remember that this is the assertion that $Q_{A}$-independent sets in $M$ remain $Q_{B}$-independent, or equivalently that some $Q_{A}$-basis for $M$ remains $Q_{B}$-independent, as in Proposition \ref{strong inclusion}.)

Let $G$ be the random graph. Then there is a $Q_{G}$-module $M$ of infinite dimension (i.e., no finite set spans $M$ over $Q_{G}$). Let $T^{\neg \mathrm{StFk}}$ be the theory of this structure in the language $\mathcal{L}$. We show that $T^{\neg \mathrm{StFk}}$ is a counterexample to the stable forking conjecture.

The $\mathcal{L}$-structure $(G, M)$ has the following three properties:

\begin{itemize}
    \item The graph $G$ is a model of the theory of the random bipartite graph.

    \item The structure $(G, M)$ is a $Q_{G}$-module considered in the language $\mathcal{L}$.

    \item The $Q_{G}$-module $M$ is infinite-dimensional (no finite set spans).

\end{itemize}

We first show that the first two of these properties of $\mathcal{L}$-structures are expressible by a set of first-order sentences, so every model of $T^{\neg \mathrm{StFk}}$ has these properties.

That $G$ is a model of the theory of the random bipartite graph is immediately expressible by a set of first-order sentences. To show that the second condition is expressible, let us first show that we can express the property of the structure $(G, M)$ being an $R_{G}$-module considered in the language $\mathcal{L}$, where we recover the $R_{G}$-module substructure just as we saw how to recover the $Q_{G}$-module structure above. Given some vertices $v_{1}, \ldots, v_{n} \in G$, we may uniformly determine in their quantifier-free $I, J, E$ type which pairs of polynomial expressions in $X_{v_{1}}, \ldots, X_{v_{n}}$ represent the same element of $R_{G}$. Then the following condition is expressible, and equivalent to being an $R_{G}$-module considered in the language $\mathcal{L}$: first, $M$ is a $\mathbb{Q}$-vector space, where the vertices of $G$ represent actions by individual $\mathbb{Q}$-linear maps; thus polynomial expressions determine actions by individual $\mathbb{Q}$-linear maps from those actions by addition, rational scalar multiplication and composition, as above. And second, when the quantifier-free $I, J, E$-type of $v_{1}, \ldots, v_{n} \in G$ is $q(x_{1}, \ldots, x_{n})$, then for every pair of polynomial expressions in $x_{1}, \ldots x_{n}$ from the predetermined list of pairs of polynomial expressions that evaluate to the same element of $R_{G}$ on $X_{w_{1}}, \ldots X_{w_{n}}$ for $w_{1}, \ldots, w_{n} \in G$ realizing $q(x_{1}, \ldots, x_{n})$, the actions on $M$ determined by those polynomial expressions from the actions represented by $v_{1}, \ldots, v_{n}$ are identical. We now show that the full condition of being a $Q_{G}$-module (and not just an $R_{G}$-module) considered in the language $\mathcal{L}$ is expressible. Similarly to the case of polynomial expressions, given some vertices $v_{1}, \ldots, v_{n} \in G$, we may uniformly determine in their quantifier-free $I, J, E$ type which (right) \textit{fractions} of polynomial expressions in $X_{v_{1}}, \ldots, X_{v_{n}}$ represent the same element of $Q_{G}$. Then the following condition is expressible, and equivalent to being an $Q_{G}$-module considered in the language $\mathcal{L}$: first, $M$ is an $R_{G}$-module considered in the language $\mathcal{L}$ where scalar multiplication is always invertible; thus \textit{fractions} of polynomial expressions determine actions by individual $\mathbb{Q}$-linear maps from those actions by addition, rational scalar multiplication, composition, and \textit{inversion}, as above. And second, when the quantifier-free $I, J, E$-type of $v_{1}, \ldots, v_{n} \in G$ is $q(x_{1}, \ldots, x_{n})$, then for every pair of \textit{fractions} of polynomial expressions in $x_{1}, \ldots x_{n}$ from the predetermined list of pairs of polynomial expressions that evaluate to the same element of $R_{G}$ on $X_{w_{1}}, \ldots X_{w_{n}}$ for $w_{1}, \ldots, w_{n} \in G$ realizing $q(x_{1}, \ldots, x_{n})$, the actions on $M$ determined by those fractions of polynomial expressions from the actions represented by $v_{1}, \ldots, v_{n}$ are identical.

We next show that there is a set of first-order sentences that is satisfied by every $\mathcal{L}$-structure satisfying all three properties, and such that every \textit{$\aleph_{1}$-saturated} $\mathcal{L}$-structure satisfying this set of first-order sentences satisfies all three properties. Thus every \textit{$\aleph_{1}$-saturated} model of $T^{\neg\mathrm{StFk}}$ will satisfy these properties. The following property of $(G, M)$ is expressible by a set of first-order sentences, and by $\aleph_{1}$-saturatedness, will be as desired: first, $(G, M)$ satisfies the first two properties, $G$ being a model of the theory of the random graph and $(G, M)$ being a $Q_{G}$-module considered in the language $\mathcal{L}$. (We just showed that this is expressible by a set of first-order sentences.) And second, given $n, m, \ell < \omega$ and fractions of polynomial expressions $\Phi^{j}_{i}(x_{1}, \ldots x_{m} )$ for $1 \leq i \leq n$, $j \leq \ell$, for all $m_{1}, \ldots m_{n} \in M$ and $v^{1}_{1}, \ldots, v^{1}_{m}, \ldots, v^{\ell}_{1}, \ldots, v^{\ell}_{m}  \in G$, there is some $m_{n+1} \in M$ with $m_{n+1} \neq \sum^{n}_{i =1} \Phi^{j}_{i}(X_{v^{j}_{1}}, \ldots, X_{v^{j}_{m}}) m_{i}$ for each $j \leq \ell$. (This is expressible by a set of first-order sentences by the above discussion of how the $\mathcal{L}$-structure canonically determines the $Q_{G}$-module structure.)

Let $\mathbb{M} \models T^{\neg \mathrm{StFk}}$ be a sufficiently saturated ambient model of $T^{\neg \mathrm{StFk}}$--say, $\kappa=\lambda^{+}$-saturated, for $\lambda$ large enough to apply the Erdős–Rado theorem, so particularly $\kappa \geq \aleph_{1}$. Then $\mathbb{M}$ satisfies all three of the above properties. In fact, by the same reasoning by which every $\aleph_{1}$-saturated model of $T^{\neg \mathrm{StFk}}$ has infinite dimension, $\mathbb{M}$ has dimension $\kappa$. We show that the strong inclusion relation $\leq$ defined above yields an embedding property for the ambient or ``monster" model $\mathbb{M}$.

\begin{lemma}\label{embedding property for strong inclusions}
    Let $A, B$ be small (i.e. of size less than $\kappa$) $\mathcal{L}$-structures with $A \leq B$, $A \leq \mathbb{M}$. (So specifically, $A$ is a $Q_{\Gamma(A)}$-module, and $B$ is a $Q_{\Gamma(B)}$-module.) Then there is an embedding $\iota: B \hookrightarrow \mathbb{M}$ of $\mathcal{L}$-structures with $\iota|_{A}=\mathrm{id}_{A}$ and $\iota(B) \leq \mathbb{M}$.
\end{lemma}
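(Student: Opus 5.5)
We will build $\iota$ in two stages: first the graph sort, then the module sort. Write $\Gamma(A)\subseteq\Gamma(B)$ for the graph sorts; both are induced subgraphs of bipartite graphs, with $\Gamma(A)\subseteq\Gamma(\mathbb{M})$.

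\emph{Graph sort.} Since $\Gamma(\mathbb{M})$ is a $\kappa$-saturated model of the theory of the random bipartite graph and $|\Gamma(B)|<\kappa$, the extension property lets us embed $\Gamma(B)$ into $\Gamma(\mathbb{M})$ over $\Gamma(A)$ as an induced subgraph respecting $I,J$. We add the new vertices one at a time by transfinite induction, each time realizing a small consistent quantifier-free type over the vertices already placed. Call this embedding $\iota_\Gamma$. It induces a ring embedding $R_{\Gamma(B)}\to R_{\Gamma(\mathbb{M})}\subseteq Q_{\Gamma(\mathbb{M})}$ over $R_{\Gamma(A)}$. By the universal property of the fraction division ring recalled in Subsection~\ref{quantum division algebra}, this extends uniquely to an embedding $Q_{\Gamma(B)}\hookrightarrow Q_{\Gamma(\mathbb{M})}$ over $Q_{\Gamma(A)}$. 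We identify $Q_{\Gamma(B)}$ with its image.

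\emph{Module sort.} Fix a $Q_{\Gamma(A)}$-basis $(a_i)_{i\in I}$ of $A$. Since $A\le B$, Proposition~\ref{strong inclusion} says $(a_i)$ stays $Q_{\Gamma(B)}$-independent in $B$, so we can extend it to a $Q_{\Gamma(B)}$-basis $(a_i)_{i\in I}\cup(b_k)_{k\in K}$ of $B$, with $|K|<\kappa$. Since $A\le\mathbb{M}$, the family $(a_i)$ is $Q_{\Gamma(\mathbb{M})}$-independent in $\mathbb{M}$. The span over $Q_{\Gamma(\mathbb{M})}$ of $(a_i)$ together with fewer than $\kappa$ further vectors is a proper subspace, because $\mathbb{M}$ has dimension $\kappa$ as noted above. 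Hence we can choose by transfinite recursion $m_k\in\mathbb{M}$, $k\in K$, such that $(a_i)\cup(m_k)$ is $Q_{\Gamma(\mathbb{M})}$-independent. Define $\iota$ on $B$ by
\[
\iota\Bigl(\sum_i q_ia_i+\sum_k r_kb_k\Bigr)=\sum_i q_ia_i+\sum_k r_km_k,\qquad q_i,r_k\in Q_{\Gamma(B)},
\]
and set $\iota=\iota_\Gamma$ on $\Gamma(B)$.

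\emph{Verification.} The map $\iota$ is well defined because representations in the basis are unique. It is $Q_{\Gamma(B)}$-linear, so it respects $+$, rational scalars, and the action $(v,m)\mapsto X_vm$ (with $v$ sent to $\iota_\Gamma(v)$). It is injective and fixes $A$ pointwise. On the graph sort it preserves $I,J,E$, so it is an $\mathcal{L}$-embedding. Finally, $\iota(B)\le\mathbb{M}$ holds by the second condition of Proposition~\ref{strong inclusion}: $\iota$ sends a $Q_{\Gamma(B)}$-basis of $B$ to the family $(a_i)\cup(m_k)$, which is $Q_{\Gamma(\mathbb{M})}$-independent by construction.

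The main obstacle is the graph step, and specifically making the induced map on fraction division rings compatible with the identification already in use for $A$. The issue is that the $\mathcal{L}$-structure recovers $Q$-actions only through the graph sort. This is resolved by the uniqueness in the universal property: the extension is the identity on $Q_{\Gamma(A)}$ because it is the identity on $R_{\Gamma(A)}$. A secondary point is to check that the saturation of $\Gamma(\mathbb{M})$ and the dimension bound on $\mathbb{M}$ are genuinely available for sets of size less than $\kappa$. This holds because $\mathbb{M}$ is $\kappa$-saturated and has dimension $\kappa$, as established before the lemma.
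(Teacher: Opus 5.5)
Your proof is correct and uses the same ingredients as the paper: saturation of the graph sort together with the universal property of the fraction division ring to embed $Q_{\Gamma(B)}$ into $Q_{\Gamma(\mathbb{M})}$ over $Q_{\Gamma(A)}$; the fact that $\mathbb{M}$ has dimension $\kappa$, to send the new basis vectors to fresh $Q_{\Gamma(\mathbb{M})}$-independent vectors; and the basis criterion of Proposition~\ref{strong inclusion} to get $\iota(B)\le\mathbb{M}$. The only difference is organizational: the paper factors $A\le B$ as $A\le(\Gamma(B),\mathrm{span}_{Q_{\Gamma(B)}}V(A))\le B$ and handles the two steps as separate cases, whereas you do both in one pass, which also avoids composing two embeddings.
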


\begin{proof}
    Every inclusion of the form $(C, M) \leq (D, N)$ can be decomposed as $(C, M) \subseteq (D, \mathrm{span}_{D}(M))\subseteq (D, N)$. Then $(C, M) \leq (D, \mathrm{span}_{Q_{D}}(M))\leq (D, N)$, the first of these strong inclusions following from $(C, M) \leq (D, N)$, the second immediate from the definition of $\leq$. The first inclusion is of the form $A \leq B$ where $V(B) = \mathrm{span}_{Q_{\Gamma(B)}}(V(A))$, and the second is an inclusion of the form $A \leq B$ where $\Gamma(A) = \Gamma(B)$. The relation $\leq$ is transitive, so it suffices to prove the statement of this lemma assuming each of these two cases.

    Let us first consider inclusions of the form $A \leq B$ where $V(B) = \mathrm{span}_{Q_{\Gamma(B)}}(V(A))$. Let $\langle v_{i} \rangle_{i \in \mathcal{I}}$ be a $Q_{\Gamma(A)}$-basis for $V(A)$; then by $A \leq B$, $\langle v_{i} \rangle_{i \in \mathcal{I}}$ is linearly independent over $Q_{\Gamma(B)}$, so, by $V(B) = \mathrm{span}_{Q_{\Gamma(B)}}(V(A))$, is a $Q_{\Gamma(B)}$-basis for $V(B)$. Since $\Gamma(\mathbb{M})$ is a $\kappa$-saturated model of the theory of the random graph, there is an induced embedding $\iota: \Gamma(B) \hookrightarrow \Gamma(\mathbb{M}) $ with $\iota|_{\Gamma(A)}=\mathrm{id}_{\Gamma(A)}$. There is then a unique ring embedding $\tilde{\iota}: Q_{\Gamma(B)} \hookrightarrow Q_{\Gamma(\mathbb{M})}$ with $X_{v} \mapsto X_{\iota(v)}$ for $v \in \Gamma(B)$. For $q_{i} \in Q_{\Gamma(B)}$, extend $\iota: \Gamma(B) \hookrightarrow \Gamma(\mathbb{M})$ to an embedding  $\iota: B \hookrightarrow \mathbb{M}$ with $\iota|_{A}=\mathrm{id}_{A}$ by $\iota(\sum_{i \in \mathcal{I}} q_{i}v_{i})=\sum_{i \in \mathcal{I}} \tilde{\iota}(q_{i})v_{i} $ for $q_{i} \in Q_{\Gamma(B)}$ (with all but finitely many of the $q_{i}$ equal to $0$). Then $\langle v_{i} \rangle_{i \in \mathcal{I}}$ spans $V(\iota(B))$ over $\tilde{\iota}(Q_{\Gamma(B)})= Q_{\iota(\Gamma(B))}$. By $A \leq \mathbb{M}$ and the fact that $\langle v_{i} \rangle_{i \in \mathcal{I}}$ is linearly independent over $Q_{\Gamma(A)}$, $\langle v_{i} \rangle_{i \in \mathcal{I}}$ is linearly independent over $Q_{\Gamma(\mathbb{M})}$, so particularly over $Q_{\iota(\Gamma(B))}$; thus $\langle v_{i} \rangle_{i \in \mathcal{I}}$ is a $Q_{\iota(\Gamma(B))}$-basis for $V(\iota(B))$. So a $Q_{\iota(\Gamma(B))}$-basis for $V(\iota(B))$ remains linearly independent over $Q_{\Gamma(\mathbb{M})}$, and $\iota(B) \leq \mathbb{M}$, as desired.

    The remaining case is where $A \leq B$ where $\Gamma(A) = \Gamma(B)$. Let $\langle v_{i} \rangle_{i \in \mathcal{I}}$ be a $Q_{\Gamma(A)}=Q_{\Gamma(B)}$-basis for $V(A)$, and complete this to a $Q_{\Gamma(A)}=Q_{\Gamma(B)}$-basis $\langle v_{i} \rangle_{i \in \mathcal{J}}$ for $V(B)$, where $\mathcal{I} \subseteq \mathcal{J}$. Define $\iota: \Gamma(A)=\Gamma(B) \hookrightarrow \mathbb{M}$ to be the identity. By $A\leq \mathbb{M}$, $\langle v_{i} \rangle_{i \in \mathcal{I}}$, being linearly independent over $Q_{\Gamma(A)}$, is linearly independent over $Q_{\Gamma(\mathbb{M})}$. Since $V(\mathbb{M})$ is $\kappa$-dimensional over $Q_{\Gamma(\mathbb{M})}$ (i.e., no set of size less than $\kappa$ spans), $\langle v_{i} \rangle_{i \in \mathcal{I}}$ can be extended to a $Q_{\Gamma(\mathbb{M})}$-linearly independent set $\langle w_{i} \rangle_{i \in \mathcal{J}}$ with $v_{i} = w_{i}$ for $i \in \mathcal{I}$. For $q_{i} \in Q_{\Gamma(A)}=Q_{\Gamma(B)}$, extend $\Gamma(A)=\Gamma(B)\hookrightarrow \Gamma(\mathbb{M})$ to a map  $\iota: B \hookrightarrow \mathbb{M}$ with $\iota|_{A}=\mathrm{id}_{A}$ by $\iota(\sum_{i \in \mathcal{J}} q_{i}v_{i})=\sum_{i \in \mathcal{J}} q_{i}w_{i}$. Then $\langle w_{i} \rangle_{i \in \mathcal{J}}$ spans $V(\iota(B))$ over $Q_{\Gamma(A)}=Q_{\iota(\Gamma(B))}$. Because  $\langle w_{i} \rangle_{i \in \mathcal{J}}$ are linearly independent over $Q_{\Gamma(\mathbb{M})}$, they are in particular linearly independent over $Q_{\Gamma(\iota(B))}$, so a basis for $V(\iota(B))$ over $Q_{\Gamma(\iota(B))}$. So a basis for $V(\iota(B))$ over $Q_{\Gamma(\iota(B))}$ remains linearly independent over $Q_{\Gamma(\mathbb{M})}$, and again $\iota(B) \leq \mathbb{M}$ as desired.
\end{proof}

We now show that the strong inclusion relation $\leq $ yields a closure relation on (small) subsets of $\mathbb{M}$.

\begin{prop}\label{closure with respect to strong inclusion}
    For small $A \subset \mathbb{M}$, there is a small set $\mathrm{cl}(A) \supset A$ contained in $\mathbb{M}$ with $\mathrm{cl}(A) \leq \mathbb{M}$, and with $\mathrm{cl}(A)$ contained in every $B \supset A$ with $B \leq \mathbb{M}$
\end{prop}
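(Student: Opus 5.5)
The plan is to construct $\mathrm{cl}(A)$ explicitly rather than as an intersection, because an intersection of strong substructures need not obviously be strong. Let $\Gamma_{0} = \Gamma(A)$. Call a set $B = (\Gamma(B), V(B)) \subseteq \mathbb{M}$ \emph{closed} if $\Gamma(B)$ is any induced subgraph, $V(B)$ is a $Q_{\Gamma(B)}$-submodule, and $B \leq \mathbb{M}$; that is, some (equivalently, by Proposition \ref{strong inclusion}, every) $Q_{\Gamma(B)}$-basis of $V(B)$ stays $Q_{\Gamma(\mathbb{M})}$-independent. Since any closed $B \supseteq A$ must contain $\Gamma(A)$ and be a module, it must contain $\mathrm{span}_{Q_{\Gamma(B)}}(V(A))$. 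The task therefore reduces to finding the smallest graph $\Gamma^{*} \supseteq \Gamma(A)$ such that $\mathrm{span}_{Q_{\Gamma^{*}}}(V(A))$ is strong in $\mathbb{M}$. We then set $\mathrm{cl}(A) = (\Gamma^{*}, \mathrm{span}_{Q_{\Gamma^{*}}}(V(A)))$.

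The construction uses least supports. Choose a $Q_{\Gamma(\mathbb{M})}$-basis $\langle b_{i}\rangle_{i\in\mathcal{I}}$ of $\mathrm{span}_{Q_{\Gamma(\mathbb{M})}}(V(A))$ with each $b_{i}$ taken from $V(A)$; this is possible by the basis-extension discussion of Subsection \ref{modules over division rings}, and $|\mathcal{I}| \le |A|$. Each $a \in V(A)$ then has unique coordinates $a = \sum_{i} q_{a,i} b_{i}$ with $q_{a,i} \in Q_{\Gamma(\mathbb{M})}$. By Corollary \ref{least support}, each $q_{a,i}$ has a finite least support $S(q_{a,i}) \subseteq \Gamma(\mathbb{M})$. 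Define
\[ \Gamma^{*} = \Gamma(A) \cup \bigcup_{a \in V(A),\, i \in \mathcal{I}} S(q_{a,i}), \]
which is small. With this choice, $V(A) \subseteq \mathrm{span}_{Q_{\Gamma^{*}}}(b_{i})$, and the $b_{i}$ form a $Q_{\Gamma^{*}}$-basis of $\mathrm{span}_{Q_{\Gamma^{*}}}(V(A))$ that remains $Q_{\Gamma(\mathbb{M})}$-independent. Hence $\mathrm{cl}(A) \leq \mathbb{M}$.

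The main obstacle is minimality, and in particular showing that $\Gamma^{*}$ does not depend on the chosen basis. Let $B \supseteq A$ be closed. The plan is to take a $Q_{\Gamma(B)}$-basis of $V(B)$ that extends a maximal $Q_{\Gamma(B)}$-independent subset of $\{b_i\}$; this subset already spans $V(A)$ over $Q_{\Gamma(B)}$, hence is $Q_{\Gamma(\mathbb{M})}$-independent by $B \le \mathbb{M}$, and so must be all of $\{b_{i}\}$. Each $a \in V(A) \subseteq V(B)$ then has coordinates in $Q_{\Gamma(B)}$ with respect to the $b_{i}$. Because $B \leq \mathbb{M}$, these coordinates coincide with the $Q_{\Gamma(\mathbb{M})}$-coordinates $q_{a,i}$. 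So $q_{a,i} \in Q_{\Gamma(B)}$, and Corollary \ref{least support} gives $S(q_{a,i}) \subseteq \Gamma(B)$. Therefore $\Gamma^{*} \subseteq \Gamma(B)$, and $\mathrm{span}_{Q_{\Gamma^{*}}}(V(A)) \subseteq V(B)$ because $V(B)$ is a $Q_{\Gamma(B)}$-module.

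This argument applies to an arbitrary basis from $V(A)$, so the resulting $\Gamma^{*}$ is contained in every closed $B \supseteq A$, and in particular in the closure obtained from any other basis. The closure is therefore canonical. The one subtle point is that Corollary \ref{least support} is stated for finite subsets and infinite $A$. Here it is applied inside $Q_{\Gamma(\mathbb{M})}$ to an infinite $C' = \Gamma(B)$; this is covered by the first sentence of its proof, since membership in $Q_{C'}$ always reduces to membership in $Q_{C_{0}}$ for some finite $C_{0} \subseteq C'$.
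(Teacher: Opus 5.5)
Your proof is correct, but it takes a different route from the paper's. The paper defines ``closed'' intrinsically: $V(A)$ is a $Q_{\Gamma(A)}$-module, and for \emph{every} $Q_{\Gamma(\mathbb{M})}$-independent tuple from $V(A)$ and every further vector of $V(A)$, the least supports of the (unique) coordinates lie in $\Gamma(A)$. It then only sketches that a least closed superset of $A$ exists (``closing off under multivalued partial functions''), and proves separately that closed is equivalent to $\leq \mathbb{M}$ in both directions. You instead fix one $Q_{\Gamma(\mathbb{M})}$-basis $\langle b_i\rangle$ drawn from $V(A)$ and add, in a single step, the least supports of the coordinates of elements of $V(A)$. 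Strongness is then immediate because $\langle b_i\rangle$ remains a $Q_{\Gamma^{*}}$-basis. Minimality needs only the direction ``$B\leq\mathbb{M}$ forces coordinates into $Q_{\Gamma(B)}$,'' plus Corollary~\ref{least support}.

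What your route buys is an explicit one-step description that avoids the paper's sketched iteration. It also directly yields facts the paper proves only later, in the Claim under local character: $V(\mathrm{cl}(A))=\mathrm{span}_{Q_{\Gamma(\mathrm{cl}(A))}}(V(A))$, and finiteness of $\Gamma(\mathrm{cl}(a))$ for finite $a$. What the paper's route buys is a basis-free criterion for $B\leq\mathbb{M}$, which it reuses to get $\mathrm{cl}(C)=\bigcup_{C_0 \text{ finite}}\mathrm{cl}(C_0)$; your description also gives this with a little work.

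One sentence in your minimality step should be repaired. You claim that a maximal $Q_{\Gamma(B)}$-independent subset of $\{b_i\}$ ``already spans $V(A)$ over $Q_{\Gamma(B)}$.'' That is exactly what your next sentence establishes, so it cannot be an input there. It is also unnecessary. Since $Q_{\Gamma(B)}\subseteq Q_{\Gamma(\mathbb{M})}$, the set $\{b_i\}$ is already $Q_{\Gamma(B)}$-independent, so you can extend it directly to a $Q_{\Gamma(B)}$-basis of $V(B)$. That basis is $Q_{\Gamma(\mathbb{M})}$-independent because $B\leq\mathbb{M}$, and uniqueness of coordinates then gives $q_{a,i}\in Q_{\Gamma(B)}$. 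Your closing caveat about Corollary~\ref{least support} is also unneeded, since that corollary is already stated for arbitrary $C'\subseteq A$.
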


\begin{proof}

Call a (small) set $A \subset \mathbb{M}$ \textit{closed} if it satisfies the following conditions:

\begin{itemize}
    \item The set $V(A)$ is a submodule of $V(\mathbb{M})$ considered as a $Q_{\Gamma(A)}$-module.

    \item For $\langle v_{i}\rangle^{n}_{i = 1}$ linearly independent over $Q_{\Gamma(\mathbb{M})}$ with $v_{i} \in V(A)$, and $v_{n+1} \in V(A)$, suppose $v_{n+1} = \sum^{n}_{i=1} q_{i}v_{i}$ for $q_{i} \in Q_{\Gamma(\mathbb{M})}$; thus by linear independence of $\langle v_{i}\rangle^{n}_{i = 1}$
    over $Q_{\Gamma(\mathbb{M})}$, these $q_{i}$ are the unique $q'_{i} \in Q_{\Gamma(\mathbb{M})}$ with $v_{n+1} = \sum^{n}_{i=1} q'_{i}v_{i}$.\footnote{These equations imply $\sum^{n}_{i=1} q'_{i}v_{i}=\sum^{n}_{i=1} q_{i}v_{i}$, so $\sum^{n}_{i=1} (q_{i}-q'_{i})v_{i}=0$, so $q_{i}-q'_{i} = 0$ for $1 \leq i \leq n$.} Then for $1 \leq i \leq n$, the least support (as in Corollary \ref{least support}) of $q_{i}$ is contained in $\Gamma(A)$.
\end{itemize}

(This is similar to closing off under the coordinate functions found in, say, the algebraic closure within the theory of infinite-dimensional vector spaces over an algebraically closed field, but now each coordinate function is represented by its value's least support.)

For each $A \subset \mathbb{M}$, there is a closed set $\mathrm{cl}(A) \supset A$ contained in every closed set containing $A$. (By the uniqueness remark in the second condition, and finiteness of the least support, we can find this by closing off under multivalued partial functions with finitely many values at each argument.) So it suffices to show that $A \subset \mathbb{M}$ is closed if and only if $A \leq \mathbb{M}$. 

 First, suppose $A \leq \mathbb{M}$; we show that $A$ is closed. The first condition is just the assumption required for the strong inclusion $\leq$. Now suppose  $\langle v_{i}\rangle^{n}_{i = 1}$ are linearly independent over $Q_{\Gamma(\mathbb{M})}$ with $v_{i} \in V(A)$, and $v_{n+1} \in V(A)$ with $v_{n+1} \in \sum^{n}_{i=1} q_{i}v_{i}$ for $q_{i} \in Q_{\Gamma(\mathbb{M})}$. Then $\langle v_{i}\rangle^{n+1}_{i = 1}$ are linearly dependent over $Q_{\Gamma(\mathbb{M})}$, so by $A \leq \mathbb{M}$, $\langle v_{i}\rangle^{n+1}_{i = 1}$ are linearly dependent over $Q_{\Gamma(A)}$. There is then a linear relation $\sum^{n+1}_{i = 1}s_{i}v_{i}$, with $s_{i} \in Q_{\Gamma(A)}$ and not all $s_{i}$ equal to zero. But $\langle v_{i}\rangle^{n}_{i = 1}$ are linearly independent over $Q_{\Gamma(\mathbb{M})}$, so in particular are linearly independent over $Q_{\Gamma(A)}$. Thus $s_{n+1} \neq 0$, yielding a relation $v_{n+1} = \sum^{n}_{i=1} q'_{i}v_{i}$ for $q'_{i} \in Q_{\Gamma(A)}$; by the uniqueness remarked on in the statement of the second condition, $q_{i} = q'_{i} \in  Q_{\Gamma(A)} $ for each $1 \leq i \leq n$. But then the least support of $q_{i}$ must be contained in $\Gamma(A)$, as desired for the second condition.

Now suppose that $A \subset \mathbb{M}$ is closed; we show that $A \leq \mathbb{M}$. By the first condition, $A$ is a candidate for the condition that $A \leq \mathbb{M}$. What we want to show is that, for $\langle v_{i}\rangle_{i \in \mathcal{I}}$ a $Q_{\Gamma(\mathbb{M})}$-linearly dependent set with $v_{i} \in V(A)$, $\langle v_{i}\rangle_{i \in \mathcal{I}}$ is linearly dependent over $Q_{\Gamma(A)}$. The set $\langle v_{i}\rangle_{i \in \mathcal{I}}$ contains a finite \textit{minimal} $Q_{\Gamma(\mathbb{M})}$-linearly dependent set; since if some $v_{i}$ is equal to $0$ we are done, we can assume this is a set $\langle v_{i_{k}}\rangle^{n+1}_{k =1}$ of size at least $2$. Let $\sum^{n+1}_{k = 1}s_{k}v_{i_{k}}$ be a linear relation with $s_{k} \in Q_{\Gamma(\mathbb{M})}$ and not all $s_{k}$ equal to zero; then by minimality, $s_{n+1} \neq 0$. Subtracting and dividing, there is a linear relation of the form $v_{i_{n+1}} = \sum^{n}_{k=1} q_{i_{k}}v_{i_{k}}$ with $q_{k} \in Q_{\Gamma(\mathbb{M})}$. By the second condition, for $1 \leq k \leq n$, $q_{k}$ has least support in $\Gamma(A)$, so $q_{k} \in Q_{\Gamma(A)}$. Thus $\langle v_{i_{k}}\rangle^{n+1}_{k =1}$ is linearly dependent over $Q_{\Gamma(A)}$, as desired.

\end{proof}

This gives us a description of types in $T^{\neg \mathrm{StFk}}$.

\begin{cor}\label{quantifier elimination}

Let $\mathbb{M}$, $\mathbb{M}'$ be $\kappa$-saturated models of $T^{\neg \mathrm{StFk}}$, and let $A \subset \mathbb{M}$, $A' \subset \mathbb{M}'$ be small sets. Then $A$ and $A'$ satisfy the same type if and only if there is an isomorphism $\iota: \mathrm{cl}(A) \to \mathrm{cl}(A')$ with $\iota(A) = A'$ (as enumerated sets.)

\end{cor}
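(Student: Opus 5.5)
The plan is to prove both directions, with the backward direction done by a back-and-forth argument driven by Lemma \ref{embedding property for strong inclusions}.

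For the forward direction, suppose $A \equiv A'$, witnessed by the enumerations. Since $\mathbb{M}'$ is $\kappa$-saturated and $A$ is small, the elementary map $A \to A'$ extends to an elementary map $f$ defined on $\mathrm{cl}(A)$. The task is then to show two things: $f(\mathrm{cl}(A)) = \mathrm{cl}(A')$, and $f$ is an $\mathcal{L}$-isomorphism onto its image. The latter is immediate, since elementary maps preserve the atomic structure. For the former, I will use that $\mathrm{cl}$ is described in the proof of Proposition \ref{closure with respect to strong inclusion} as the closure under the conditions defining \emph{closed} sets. Those conditions are type-definable in terms of the $\mathcal{L}$-structure:
\begin{itemize}
\item being a $Q_{\Gamma}$-submodule is expressed via the formulas recovering fractions of polynomial expressions;
\item linear independence over $Q_{\Gamma(\mathbb{M})}$ of a finite tuple is the omission of a relation whose coefficients come from any vertices, which is a conjunction of negated existential formulas;
\item \emph{the least support of the coordinate $q_i$ is contained in $\Gamma(A)$} becomes the statement that the coordinate can be written as a fraction in vertices from the set.
\end{itemize}
So $f$ maps closed sets to closed sets. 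Applying the same reasoning to $f^{-1}$ gives $f(\mathrm{cl}(A)) = \mathrm{cl}(A')$. Alternatively, I could define $\mathrm{cl}(A)$ as the union of the finite stages of the closure process and check that each stage is determined by $\mathrm{tp}(A)$.

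For the backward direction, suppose $\iota : \mathrm{cl}(A) \to \mathrm{cl}(A')$ is an isomorphism with $\iota(A) = A'$. I will run a back-and-forth argument to show that $\iota$ is a partial elementary map. Let $\mathcal{F}$ be the family of isomorphisms $g : C \to C'$ between small sets with $C \leq \mathbb{M}$ and $C' \leq \mathbb{M}'$. Then $\iota \in \mathcal{F}$ by Proposition \ref{closure with respect to strong inclusion}. Given $g \in \mathcal{F}$ and $c \in \mathbb{M}$, set $D = \mathrm{cl}(Cc)$, which satisfies $C \leq D$. Transport $D$ along $g$ abstractly: form the structure $g_*(D) \supseteq C'$ by renaming $C$ via $g$, so that $C' \leq g_*(D)$. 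Lemma \ref{embedding property for strong inclusions} holds equally for $\mathbb{M}'$, because the proof used only $\kappa$-saturation and the three properties. Applying it gives an embedding of $g_*(D)$ over $C'$ into $\mathbb{M}'$ with strongly included image, which extends $g$ to $D$. The back step is symmetric.

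From this, every $g \in \mathcal{F}$ preserves all formulas, by induction on formula complexity using the extension property. For the atomic base case I will note that atomic formulas only use $+$, $\mathbb{Q}$-multiplication, $\cdot$, $E$, $I$ and $J$, all of which $g$ preserves as an $\mathcal{L}$-isomorphism, including the term values of $\Gamma$-vertices acting on vectors. In the existential step, a witness $c$ lies in some extension, so the induction goes through.

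I expect the main obstacle to be the forward direction's claim that $\mathrm{cl}$ is preserved by elementary maps. Concretely, one has to verify that \emph{the least support of a coordinate lies in $\Gamma(A)$} is expressible as a (possibly infinitary) condition using only parameters from $A$. My first attempt will be to avoid this issue by using Corollary-free uniqueness: if $f$ is elementary on $\mathrm{cl}(A)$, then $f(\mathrm{cl}(A)) \leq \mathbb{M}'$. This follows because "$\leq \mathbb{M}$" is a conjunction of statements saying that finite $Q_{\Gamma}$-independent tuples admit no $Q_{\Gamma(\mathbb{M})}$-relation, and each such statement is first-order with parameters in the set. Hence $f(\mathrm{cl}(A)) \supseteq \mathrm{cl}(A')$. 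Minimality of $\mathrm{cl}(A')$, transferred back along $f^{-1}$ applied to $\mathrm{cl}(A')$, gives equality.
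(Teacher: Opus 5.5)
Your proposal is correct and takes essentially the same route as the paper. For the forward direction, the paper also extends the elementary map to $\mathrm{cl}(A)$, uses that its image is strongly included in $\mathbb{M}'$, and concludes by minimality of $\mathrm{cl}$ in both directions; this is the argument your final paragraph isolates, so the least-support type-definability detour is unnecessary. For the converse, the paper likewise observes that isomorphisms between small $C \leq \mathbb{M}$ and $C' \leq \mathbb{M}'$ form a back-and-forth system via Proposition~\ref{closure with respect to strong inclusion} and Lemma~\ref{embedding property for strong inclusions}, which is exactly your argument.
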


\begin{proof}
    First, suppose $A$ and $A'$ satisfy the same type; then there is an isomorphism $\iota: B \to B'$ with $B \supseteq \mathrm{cl}(A)$, $B' \supseteq \mathrm{cl}(A')$,  $\iota(B) = B'$ (as enumerated sets) and $\mathrm{tp}(B)=\mathrm{tp}(B')$. Then $A' \subseteq \iota(\mathrm{cl}(A)) \leq \mathbb{M}'$, so $\mathrm{cl}(A') \subseteq \iota(\mathrm{cl}(A))$. Similarly, $\mathrm{cl}(A) \subseteq \iota^{-1}(\mathrm{cl}(A'))$, so $\mathrm{cl}(A') \supseteq \iota(\mathrm{cl}(A))$, and $\iota(\mathrm{cl}(A))=\mathrm{cl}(A')$. Restricting to $\mathrm{cl}(A)$ gives us our isomorphism $\iota: \mathrm{cl}(A) \to \mathrm{cl}(A')$ with $\iota(A) = A'$.

    The converse follows from the fact that isomorphisms $\iota: A \to A'$ with $A \leq \mathbb{M}$, $A' \leq \mathbb{M}'$ form a back-and-forth system, by the previous proposition and lemma. (If there is an isomorphism between $B \leq \mathbb{M}$, $B' \leq \mathbb{M}$ (with $\iota(B) = B'$ as enumerated sets), that these isomorphisms form a back-and-forth system implies that $B$ and $B'$ satisfy the same type. Apply this to $\mathrm{cl}(A)$ and $\mathrm{cl}(A')$ (with compatible orderings chosen on $\mathrm{cl}(A)$ and $\mathrm{cl}(A')$ extending those on $A$ and $A'$).)
\end{proof}

\section{Falseness of the stable forking conjecture}\label{counterexample verification}

\subsection{Independence in and simplicity of $T^{\neg\mathrm{StFk}}$}\label{simplicity of the example}

Our next goal is to show that $T^{\neg\mathrm{StFk}}$ is simple, and thus a candidate for being a counterexample to the stable forking conjecture, as well as to characterize forking in $T^{\neg\mathrm{StFk}}$, toward showing that the conclusion of the stable forking conjecture fails there. We will employ a standard method from the literature for showing that a theory is simple, and characterizing forking in that theory: use the fact that simplicity is characterized by the existence of an abstract relation between sets satisfying certain desiderata, and that such a relation, if it exists, coincides with the independence relation given by non-forking. The original version of this characterization, which we will use, is the following classical theorem of Kim and Pillay:

\begin{fact}\label{Kim-Pillay theorem}(Kim and Pillay, \cite{KP99}, Theorem 4.2)

Let $\mathbb{M} \models T$ be a $\kappa$-saturated model of a theory $T$. Suppose there is a relation $\ind$, with $A \ind_{C} B$ defined for small $A, B, C \subset \mathbb{M}$, satisfying the following properties:

\begin{itemize}
    \item \emph{Invariance:} If $A \ind_{C} B $ and $ABC \equiv A'B'C'$, $A' \ind_{C'} B'$.\footnote{If $\mathbb{M}$ is strongly $\kappa$-homogeneous, so any two small sets satisfying the same type are $\mathrm{Aut}(\mathbb{M})$-conjugates of each other, then this is the same as invariance of $\ind$ under automorphisms of $\mathbb{M}$. To avoid the set-theoretic assumptions that the existence of strongly $\kappa$-homogeneous models requires, we use this alternative statement that whether $A \ind_{B} C$ depends only on the type of $ABC$.}
    
    \item \emph{Local character}: For any finite tuple $a$ and small set $C$, there is a set $C_{0} \subset C$ with $|C_{0}| \leq |T|$ such that $a\ind_{C_0} C$.

    \item \emph{Finite character:} If $A \nind_{C} B $, there is some finite $b \subset B$ such that $A \nind_{C} b$.

    \item \emph{Monotonicity:} If $A \ind_{C} B$ and $B' \subseteq B$, then $A \ind_{C} B'$.\footnote{This is originally incorporated by Kim and Pillay into the finite character assumption; see, e.g., \cite{Adl07} for the terminology.}

    \item \emph{Extension:} For any small sets $A, B, C$, there is $A' \equiv_{C} A$ with $A' \ind_{C} B$.\footnote{This condition is also referred to as ``full existence"; see, e.g., \cite{D21}. In Kim and Pillay's statement of this property, as well as transitivity and base monotonicity, $A$ is assumed to be finite. Since there is no harm in strengthening any of the conditions for $\ind$, we drop this assumption.}
    \item \emph{Symmetry}: if $A \ind_{C} B$, then $B \ind_{C} A$.
    \item \emph{Transitivity}: if $A \ind_{D} C$, $A \ind_{C} B$ and $D \subset C \subset B $, then $A \ind_{D} B$.

    \item \emph{Base monotonicity}: if $A \ind_D B$ and $D \subset C \subset B $, then $A \ind_C B$.\footnote{This is originally incorporated by Kim and Pillay into the transitivity assumption; see, e.g., \cite{KR17} for the terminology.} 

    \item \emph{Independence theorem over models}: If $M \prec \mathbb{M}$, $A_{1} \ind_{M} B$, $A_{2} \ind_{M} C$, $C \ind_{M} B$, and $A_{1} \equiv_{M} A_{2}$, then there is some $A \ind_{M} BC$ with $A \equiv_{MB} A_{1}$ and $A \equiv_{MC} A_{2}$.

    Then $T$ is simple and $\ind$ coincides with the independence relation given by non-forking.

\end{itemize}

\end{fact}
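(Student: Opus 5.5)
The plan is to compare $\ind$ with forking independence $\ind^{f}$ in both directions; simplicity of $T$ then follows at once. Once $\ind$ and $\ind^{f}$ coincide, local character of $\ind$ becomes local character of forking. Local character of forking is equivalent to simplicity: in a theory with the tree property one builds, for every $\kappa$, a type over a set of size $\kappa$ that forks over every small subset. So the whole argument reduces to two inclusions: (a) if $a \ind_C B$ then $\mathrm{tp}(a/CB)$ does not fork over $C$; and (b) if $\mathrm{tp}(a/CB)$ does not fork over $C$ then $a \ind_C B$. By finite character and monotonicity it suffices to treat finite $a$ and finite $b$ in place of $B$ in both directions.

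\textbf{Direction (b): non-forking implies $\ind$.} I would argue by contrapositive: assume $a \nind_C b$ and find a formula in $\mathrm{tp}(a/Cb)$ that divides over $C$.
\begin{itemize}
\item Use extension repeatedly to build a long sequence $(b_i)_{i<\lambda}$ of copies of $b$ over $C$ with $b_i \ind_C b_{<i}$.
\item Apply Erd\H{o}s--Rado, with $\lambda$ chosen large as in the choice of $\kappa$ above, to extract a $C$-indiscernible sequence with the same type.
\item Symmetry, transitivity and invariance keep the extracted sequence $\ind$-independent over $C$.
\item Suppose $\mathrm{tp}(a/Cb)$ did not divide. Then there is $a'$ with $a'b_i \equiv_C ab$ for all $i$. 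Local character gives $a' \ind_{C b_{<\alpha}} C b_{<\lambda}$ for some $\alpha<\lambda$, which can be arranged since $|T|$ is small compared with $\lambda$.
\item Base monotonicity and transitivity, together with $b_{\alpha} \ind_C b_{<\alpha}$, then yield $a' \ind_C b_\alpha$. This contradicts $a'b_\alpha \equiv_C ab$ and invariance.
\end{itemize}
This shows that $\nind$ implies dividing. Combining this with extension and finite character upgrades the conclusion from dividing to forking.

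\textbf{Direction (a): $\ind$ implies non-forking.} Assume $a \ind_C b$ and let $(b_i)_{i<\omega}$ be $C$-indiscernible with $b_0=b$. The goal is a common realization $a'$ with $a'b_i \equiv_C ab$ for all $i$.
\begin{itemize}
\item First reduce to a base that is a model $M \supseteq C$. Use extension to take $M$ with $M \ind_C ab\,(b_i)_i$, then replace the sequence by one indiscernible over $M$, again by Erd\H{o}s--Rado and invariance.
\item By transitivity and symmetry we still get $a \ind_M b$ and $b_{i} \ind_M b_{<i}$.
\item Amalgamate inductively with the independence theorem over $M$. Given $a_n \ind_M b_{<n}$ realizing the types $ab_i$ for $i<n$, and $a_0' \equiv_{Mb_n} a$ chosen via indiscernibility, amalgamate them over $M$ using $b_n \ind_M b_{<n}$. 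This gives $a_{n+1} \ind_M b_{\le n}$.
\item Compactness then gives the common realization $a'$, so $\mathrm{tp}(a/Cb)$ does not divide over $C$.
\item Finally, extension of $\ind$ to arbitrary $B$ shows that non-dividing witnessed by $\ind$ is preserved under extension, which is what is needed for non-forking.
\end{itemize}

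I expect the main obstacle to be this reduction to a model base in direction (a). The independence theorem is only available over models, while dividing is tested over arbitrary $C$. The points that need care are: keeping the type of $ab$ over $C$ fixed when passing to $M$, and checking that the amalgamated types over $M$ restrict correctly to $C$. I would first try to settle this with the ``$M \ind_C$ everything'' trick and base monotonicity. If that fails, I would follow Kim--Pillay and first prove the statement for Lascar strong types, then pass back to types over $C$.
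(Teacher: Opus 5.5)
The paper gives no proof of this fact; it quotes it from Kim and Pillay, so the benchmark is the standard argument. Your direction (b) is that argument and is correct. You build an $\ind$-independent $C$-indiscernible sequence of copies of $b$, take a common realization $a'$ from non-dividing, use local character to find $\alpha$, and then use base monotonicity, transitivity and symmetry to get $a'\ind_C b_\alpha$. This shows that non-dividing implies $\ind$, hence local character of forking and simplicity. The closing ``upgrade'' from dividing to forking is unnecessary, since a dividing formula already forks.

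The genuine gap is in direction (a), at the step ``we still get $b_i\ind_M b_{<i}$''. Nothing supports it. The sequence $(b_i)_{i<\omega}$ is an arbitrary $C$-indiscernible sequence with $b_0=b$, and nothing in the hypotheses makes it $\ind$-independent over $C$, let alone over $M$. For example, take $b_i=(e,e_i)$ with a repeated coordinate $e\notin\mathrm{acl}(C)$; this is not forking-independent, and $\ind$ is supposed to be forking. Choosing $M\ind_C ab(b_i)_i$ and applying symmetry and transitivity tells you about $M$ against the data. It does yield $a\ind_M b$, but it says nothing about the $b_i$ among themselves. Yet $b_n\ind_M b_{<n}$ is exactly the hypothesis ``$C\ind_M B$'' of the independence theorem, so your induction cannot be run. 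What the amalgamation really proves is consistency of $\{\varphi(x,b_i)\}_i$ along $\ind$-independent indiscernible sequences over a model. Non-dividing, by contrast, quantifies over all indiscernible sequences.

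The missing ingredient is Kim's lemma, and it forces an ordering: do (b) first, so that $T$ is known to be simple. In a simple theory, $\varphi(x,b)$ divides over $M$ iff $\{\varphi(x,b_i)\}_i$ is inconsistent along some (equivalently any) non-forking Morley sequence in $\mathrm{tp}(b/M)$. By (b), such a sequence satisfies $b_n\ind_M b_{<n}$, so your amalgamation goes through and gives $a\ind_M b\Rightarrow a\ind^{f}_M b$.

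To reach an arbitrary base $C$, do not re-extract indiscernible sequences; use forking calculus, which is now available:
\begin{itemize}
\item Take $M\supseteq C$ with $M\ind^{f}_C ab$; non-forking extensions exist in simple theories.
\item By (b), $M\ind_C ab$, and the $\ind$-axioms then give $a\ind_M b$ exactly as you planned, hence $a\ind^{f}_M b$.
\item Symmetry of forking gives $a\ind^{f}_C M$.
\item Transitivity of forking then gives $a\ind^{f}_C b$.
\end{itemize}
This route also removes the problems you flagged about keeping $\mathrm{tp}(ab/C)$ under control when passing to $M$, and needs no Lascar strong types.
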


(See \cite{KR17} for a version of this for $\mathrm{NSOP}_{1}$ theories.) To show $T^{\neg\mathrm{StFk}}$ is simple and to characterize forking, let us define the following independence relation, which we will show satisfies the hypotheses of the Kim-Pillay theorem:

\begin{itemize}
    \item For $C \subseteq A, B$ with $A= \mathrm{cl}(A)$, $B= \mathrm{cl}(B)$ and $C= \mathrm{cl}(C)$, $A \ind_{C} B$ if $\Gamma(A) \cap \Gamma(B) = \Gamma(C)$ and $\mathrm{span}_{Q_{\Gamma(\mathbb{M})}}(V(A))\cap\mathrm{span}_{Q_{\Gamma(\mathbb{M})}}(V(B)) = \mathrm{span}_{Q_{\Gamma(\mathbb{M})}}(V(C)) $. 

    \item For any $C \subseteq A, B$, $A \ind_{C} B$ if $\mathrm{cl}(AC) \ind_{\mathrm{cl}(C)} \mathrm{cl}(BC)$.
\end{itemize}

Let us now show the hypotheses of the Kim-Pillay theorem for $\ind$.

\textit{Invariance:} Suppose that if $A \ind_{C} B $ and $ABC \equiv A'B'C'$. We can then find a map $\iota: D \to D'$ for $D \supset \mathrm{cl}(A)\mathrm{cl}(B)\mathrm{cl}(C) $ and $D' \supset \mathrm{cl}(A')\mathrm{cl}(B')\mathrm{cl}(C') $ such that $\iota(ABC) = A'B'C'$, $\iota(D) = D'$ and $\mathrm{tp}(D) = \mathrm{tp}(D')$. As in the first paragraph of the proof of Corollary \ref{quantifier elimination}, $\iota(\mathrm{cl}(A))=\mathrm{cl}(A')$, $\iota(\mathrm{cl}(B))=\mathrm{cl}(B')$ and  $\iota(\mathrm{cl}(C))=\mathrm{cl}(C')$, so $\mathrm{cl}(A)\mathrm{cl}(B)\mathrm{cl}(C)$ and $\mathrm{cl}(A')\mathrm{cl}(B')\mathrm{cl}(C')$ satisfy the same type. But then, $\Gamma(\mathrm{cl}(A)) \cap \Gamma(\mathrm{cl}(B)) = \Gamma(\mathrm{cl}(C))$ and $\mathrm{span}_{Q_{\Gamma(\mathbb{M})}}(V(\mathrm{cl}(A)))\cap\mathrm{span}_{Q_{\Gamma(\mathbb{M})}}(V(\mathrm{cl}(B))) = \mathrm{span}_{Q_{\Gamma(\mathbb{M})}}(V(\mathrm{cl}(C))) $ imply $\Gamma(\mathrm{cl}(A')) \cap \Gamma(\mathrm{cl}(B')) = \Gamma(\mathrm{cl}(C'))$ and $\mathrm{span}_{Q_{\Gamma(\mathbb{M})}}(V(\mathrm{cl}(A')))\cap\mathrm{span}_{Q_{\Gamma(\mathbb{M})}}(V(\mathrm{cl}(B'))) = \mathrm{span}_{Q_{\Gamma(\mathbb{M})}}(V(\mathrm{cl}(C'))) $. So $A' \ind_{C'} B'$, as desired.

\textit{Local character:} 

To show local character, we first prove the following about $\mathrm{cl}(a)$ for a set $a$.

\begin{claim}
    For any set $a$, $V(\mathrm{cl}(a))=\mathrm{span}_{\Gamma(\mathrm{cl}(a))}(V(a))$. Moreover, if $a$ is finite, then $\Gamma(\mathrm{cl}(a))$ is finite. 
\end{claim}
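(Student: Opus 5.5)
We will describe $\mathrm{cl}(a)$ explicitly rather than through the closure process. Let $D \subseteq \Gamma(\mathbb{M})$ be the smallest set of vertices containing $\Gamma(a)$ such that the following holds. Whenever $v_{1}, \ldots, v_{n} \in V(a)$ are linearly independent over $Q_{\Gamma(\mathbb{M})}$, $v_{n+1} \in V(a)$, and $v_{n+1} = \sum^{n}_{i=1} q_{i}v_{i}$, the least supports (Corollary \ref{least support}) of the $q_{i}$ are contained in $D$. Concretely, $D$ is $\Gamma(a)$ together with the union of all such least supports.

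Set $B = (D, \mathrm{span}_{Q_{D}}(V(a)))$. The goal is to show $B$ is closed in the sense of the proof of Proposition \ref{closure with respect to strong inclusion}. Then $\mathrm{cl}(a) \subseteq B$. Conversely, any closed set containing $a$ must contain $D$, by the second closure condition applied to tuples from $V(a)$. It must also contain $\mathrm{span}_{Q_{D}}(V(a))$, by the first condition. Hence $\mathrm{cl}(a) = B$, which gives both $\Gamma(\mathrm{cl}(a)) = D$ and $V(\mathrm{cl}(a)) = \mathrm{span}_{\Gamma(\mathrm{cl}(a))}(V(a))$.

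The first condition for $B$ is immediate. For the second, the plan is to verify $B \leq \mathbb{M}$ directly and then use the equivalence proved in Proposition \ref{closure with respect to strong inclusion}. Choose a maximal subset $\{v_{i}\}_{i \in \mathcal{I}} \subseteq V(a)$ that is $Q_{\Gamma(\mathbb{M})}$-linearly independent. Every $w \in V(a)$ satisfies $w = \sum q_{i}v_{i}$ with $q_{i} \in Q_{\Gamma(\mathbb{M})}$, and by the definition of $D$ the least supports of these $q_{i}$ lie in $D$, so each $q_{i} \in Q_{D}$. Therefore $\{v_{i}\}$ spans $\mathrm{span}_{Q_{D}}(V(a))$ over $Q_{D}$. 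It is independent over $Q_{\Gamma(\mathbb{M})}$, hence over $Q_{D}$, so it is a $Q_{D}$-basis that remains $Q_{\Gamma(\mathbb{M})}$-independent. By Proposition \ref{strong inclusion}, $B \leq \mathbb{M}$.

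This is the main obstacle: the closure iteration might a priori need to add further vertices. The point of the argument above is that it stabilizes after one step, because the coordinates of elements of $V(a)$ with respect to a single fixed basis already determine everything. If the definition of $D$ is taken relative to the fixed basis $\{v_{i}\}$ only, then one should also check that the coordinates with respect to any other independent tuple from $V(B)$ lie in $Q_{D}$. This follows from $B \leq \mathbb{M}$ by the forward direction of the proof of Proposition \ref{closure with respect to strong inclusion}.

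For finiteness, suppose $a$ is finite. Then $\Gamma(a)$ is finite, and $\mathcal{I}$ can be taken finite. There are finitely many $w \in V(a)$, each with finitely many coordinates $q_{i}$, and each coordinate has a finite least support. So $D$, defined via this fixed basis as justified above, is finite.
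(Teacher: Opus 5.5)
Your proof is correct, but it is organized differently from the paper's. You compute $\mathrm{cl}(a)$ explicitly: $\Gamma(\mathrm{cl}(a))$ is $\Gamma(a)$ together with the least supports of the $Q_{\Gamma(\mathbb{M})}$-coordinates of elements of $V(a)$ with respect to a maximal $Q_{\Gamma(\mathbb{M})}$-independent subset. The upper bound $\mathrm{cl}(a) \subseteq B$ comes from $B \leq \mathbb{M}$ via Proposition~\ref{strong inclusion}. The lower bound comes from $\mathrm{cl}(a)$ being closed, which is the forward direction in the proof of Proposition~\ref{closure with respect to strong inclusion}, and this step uses Corollary~\ref{least support}.

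The paper instead works entirely inside $a' = \mathrm{cl}(a)$, using only minimality of $\mathrm{cl}(a)$ and transitivity of $\leq$. The span statement is immediate because $(\Gamma(a'), \mathrm{span}_{Q_{\Gamma(a')}}(V(a))) \leq (\Gamma(a'), V(a')) \leq \mathbb{M}$ trivially. For finiteness, it takes a maximal $Q_{\Gamma(a')}$-independent $a_0^V \subseteq V(a)$ and any finite $a_0^\Gamma \subseteq \Gamma(a')$ containing $\Gamma(a)$ over which the coordinates of $V(a)$ live. Then $(a_0^\Gamma, \mathrm{span}_{Q_{a_0^\Gamma}}(a_0^V))$ sits between $a$ and $a'$ and is strongly included, which forces equality.

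The paper's route is shorter and needs neither least supports nor the closed/strongly-included characterization at this point. Yours gives more information: a canonical description of $\Gamma(\mathrm{cl}(a))$, and the fact that the closure process stabilizes after one step.

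A minor point: your detour through $D$ defined relative to the fixed basis is not needed for finiteness. When $V(a)$ is finite, your original definition of $D$ already ranges over only finitely many tuples from $V(a)$, so $D$ is finite directly.
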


\begin{proof}
    Let $a' := \mathrm{cl}(a)$. First, $V(a') = \mathrm{span}_{Q_{\Gamma(a')}}(V(a))$: $a \subset (\Gamma(a'), \mathrm{span}_{Q_{\Gamma(a')}}(V(a))) \leq (\Gamma(a'), V(a'))$ by the definition of $\leq$, so $a'= \mathrm{cl}(a)=(\Gamma(a'), \mathrm{span}_{Q_{\Gamma(a')}}(V(a)))$. Second, if $a$ is finite, then $\Gamma(a')$ is finite: let $a^{V}_{0} \subseteq V(a)$ be a maximal $Q_{\Gamma(a')}$-linearly independent subset of $V(a)$. Then by the argument at the beginning of Subsection \ref{modules over division rings}, $V(a) \subseteq \mathrm{span}_{Q_{\Gamma(a')}}(a^{V}_{0})$. Since $V(a)$ is finite, there is then a finite subset $a^{\Gamma}_{0} \subset \Gamma(a')$ with $a_{0}^{\Gamma} \supset \Gamma(a)$ such that $V(a) \subseteq \mathrm{span}_{Q_{a_{0}^{\Gamma}}}(a^{V}_{0})$. Then $a_{0}^{V}$ is $Q_{\Gamma(a')}$-linearly independent, and thus $Q_{a_{0}^{\Gamma}}$-linearly independent, and therefore a $Q_{a_{0}^{\Gamma}}$-basis for $\mathrm{span}_{Q_{a_{0}^{\Gamma}}}(a^{V}_{0})$. So $(\Gamma(a), V(a)) \subset (a^{\Gamma}_{0}, \mathrm{span}_{Q_{a_{0}^{\Gamma}}}(a^{V}_{0}))\leq (\Gamma(a'), V(a'))$, so $a'= \mathrm{cl}(a) = (a^{\Gamma}_{0}, \mathrm{span}_{Q_{a_{0}^{\Gamma}}}(a^{V}_{0}))$ and $\Gamma(a')=a^{\Gamma}_{0}$ is finite.
\end{proof}

Now, suppose $a$ is a finite set; we show that for any set $C$, there is a \textit{finite} set $C_{0} \subset C$ such that $a \ind_{C_{0}} C$. (This is stronger than what is needed for local character, where $C_{0}$ is only required to be countable.)

First, because $V(\mathrm{cl}(a))=\mathrm{span}_{\Gamma(\mathrm{cl}(a))}(V(a))$, $\mathrm{span}_{\Gamma(\mathbb{M})}(\mathrm{cl}(a))=\mathrm{span}_{\Gamma(\mathbb{M})}(a)$. So the set $\mathrm{span}_{\Gamma(\mathbb{M})}(\mathrm{cl}(a)) $ will be spanned over $\Gamma(\mathbb{M})$ by a finite set. Therefore, a basis for $\mathrm{span}_{\Gamma(\mathbb{M})}(\mathrm{cl}(a)) \cap \mathrm{span}_{\Gamma(\mathbb{M})}(\mathrm{cl}(C))$ will be finite by the following claim, about modules over division rings, generalizing facts about dimension from linear algebra over commutative fields:

\begin{claim}
    Let $Q$ be a division ring, and $M$ a (left) $Q$-module which is the span of a finite set. Then $M$ has no infinite linearly independent set.
\end{claim}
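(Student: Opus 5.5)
The plan is to run the Steinitz exchange argument over the division ring $Q$, using only the fact recorded at the beginning of Subsection \ref{modules over division rings}: if $v_{1}, \ldots, v_{n}$ are linearly independent and $v_{1}, \ldots, v_{n}, v_{n+1}$ are dependent, then $v_{n+1} \in \mathrm{span}(v_{1}, \ldots, v_{n})$. Since commutativity of scalars is never needed in the exchange argument, the proof proceeds as in linear algebra over a field.

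First I would fix a finite spanning set $s_{1}, \ldots, s_{m}$ of $M$. By the observation of Subsection \ref{modules over division rings}, a maximal linearly independent subset of $\{s_{1}, \ldots, s_{m}\}$ still spans $M$. So we may assume $s_{1}, \ldots, s_{m}$ is a basis with $m$ elements.

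Next I would prove the exchange lemma. If $w_{1}, \ldots, w_{k}$ are linearly independent in $M$, then $k \leq m$. The argument is by induction on $j \leq \min(k, m)$, with the inductive statement that, after renumbering the $s_{i}$, the family $w_{1}, \ldots, w_{j}, s_{j+1}, \ldots, s_{m}$ spans $M$.

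For the step from $j$ to $j+1$, write $w_{j+1} = \sum_{i \leq j} a_{i} w_{i} + \sum_{i > j} b_{i} s_{i}$ with scalars on the left, since $M$ is a left module. Some $b_{i}$ must be nonzero, because otherwise the $w$'s would be dependent. Renumber so that $b_{j+1} \neq 0$. Then left-multiplying by $b_{j+1}^{-1}$ expresses $s_{j+1}$ in the span of $w_{1}, \ldots, w_{j+1}, s_{j+2}, \ldots, s_{m}$, so the new family still spans.

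If $k > m$, then after $m$ steps $w_{1}, \ldots, w_{m}$ spans $M$. In that case $w_{m+1}$ is a combination of $w_{1}, \ldots, w_{m}$, contradicting independence.

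Finally, an infinite linearly independent set would contain $m+1$ linearly independent elements, since linear independence is a condition on finite subsets. This contradicts the exchange lemma.

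I expect the only delicate point to be keeping track of sides. All coefficients must stay on the left, and the inversion must be $b_{j+1}^{-1}$ applied on the left. Beyond that there is no real obstacle. The upshot, used in the surrounding argument, is that the intersection $\mathrm{span}_{\Gamma(\mathbb{M})}(\mathrm{cl}(a)) \cap \mathrm{span}_{\Gamma(\mathbb{M})}(\mathrm{cl}(C))$ has a finite basis, of size at most the number of generators of $\mathrm{span}_{\Gamma(\mathbb{M})}(a)$.
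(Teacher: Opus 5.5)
Your proof is correct, but it takes a different route from the paper's. You use the Steinitz exchange argument: you fix the independent family $w_1,\ldots,w_k$ and swap it into the spanning family one vector at a time. Then $k\le m$ follows, because otherwise $w_1,\ldots,w_m$ would already span and $w_{m+1}$ would depend on them.

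The paper instead inducts on the number $n$ of generators and proves directly that any $m'_1,\ldots,m'_{n+1}\in\mathrm{span}(m_1,\ldots,m_n)$ are dependent. It first arranges that $m'_{n+1}$ has a nonzero coefficient $q_n$ on $m_n$. It then subtracts suitable left multiples $s_i m'_{n+1}$ from each $m'_i$ to kill the $m_n$-coordinate; here $s_i$ is the $m_n$-coefficient of $m'_i$ times $q_n^{-1}$ on the right. The induction hypothesis applies to the resulting $n$ elements of $\mathrm{span}(m_1,\ldots,m_{n-1})$, and a nontrivial relation among them lifts to one among $m'_1,\ldots,m'_{n+1}$.

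What each buys:
\begin{itemize}
\item The paper's elimination argument is shorter and works with an arbitrary finite spanning set from the start.
\item Your argument proves the stronger exchange property: an independent family can be completed to a spanning family using the original generators. This gives invariance of dimension directly.
\end{itemize}

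Your preliminary reduction of $s_1,\ldots,s_m$ to a basis is harmless but unnecessary. The exchange step only uses that the current family spans and that the $w$'s are independent; it never uses independence of the $s_i$.
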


\begin{proof}
    It suffices to show the following claim: for any $m_{1}, \ldots, m_{n} \in M$, any $m'_{1} \ldots, m'_{n+1} \in \mathrm{span}(m_{1}, \ldots m_{n})$ are linearly dependent. We will show this by induction on $n$. We may assume that $m'_{n+1}$ is nonzero, so we can assume without loss of generality that $m'_{n+1} = \sum^{n}_{i=1} q_{i}m_{i}$ for $q_{i} \in Q$ and $q_{n} \neq 0$. Then there are $s_{i} \in Q$ for $1 \leq i \leq n$ such that $m'_{i}-s_{i}m'_{n+1} \in \mathrm{span}(m_{1}, \ldots m_{n-1})$. By induction, these $n$ elements $m'_{i}-s_{i}m'_{n+1}$ are linearly dependent, from which it follows that  $m'_{1} \ldots, m'_{n+1}$ are linearly dependent.
\end{proof}

Since $\mathrm{span}_{\Gamma(\mathbb{M})}(\mathrm{cl}(a)) \cap \mathrm{span}_{\Gamma(\mathbb{M})}(\mathrm{cl}(C))$ has a finite basis, this basis is contained within the $\Gamma(\mathbb{M})$-span of finitely many elements of $\mathrm{cl}(C)$. But because $\mathrm{span}_{\Gamma(\mathbb{M})}(\mathrm{cl}(C)) = \mathrm{span}_{\Gamma(\mathbb{M})}(C)$ by the same reasoning as above, it follows that there is some finite $C_{1} \subset C$ whose $\Gamma(\mathbb{M})$-span contains the basis for $\mathrm{span}_{\Gamma(\mathbb{M})}(\mathrm{cl}(a)) \cap \mathrm{span}_{\Gamma(\mathbb{M})}(\mathrm{cl}(C))$, so $\mathrm{span}_{\Gamma(\mathbb{M})}(\mathrm{cl}(a)) \cap \mathrm{span}_{\Gamma(\mathbb{M})}(\mathrm{cl}(C)) \subset \mathrm{span}_{\Gamma(\mathbb{M})}(C_{1})$.

Now note that any element of the smallest closed set (in the sense of the proof of \ref{closure with respect to strong inclusion}) containing $C$ is, for some finite $C_{0} \subset C$, an element of the smallest closed set containing $C_{0}$. So because a set $S$ being closed is equivalent to $S \leq \mathbb{M}$ by the proof of Proposition \ref{closure with respect to strong inclusion}, $\mathrm{cl}(C)=\bigcup_{C_{0}\mathrm{\:finite}} \mathrm{cl}(C_{0})$. So because $\Gamma(\mathrm{cl}(aC_{1}))$ is finite (by the first claim above, because $aC_{1}$ is finite), there is some finite $C_{2} \subset C$ such that $\mathrm{cl}(C_{2})$ contains $\Gamma(\mathrm{cl}(aC_{1}) \cap \mathrm{cl}(C))$. We show that $C_{0} : = (C_{2}, C_{1})$ is as desired: $a \ind_{C_{0}} C$.

Let us first show, as a preliminary step, that for $C'_{0}:=(\Gamma(\mathrm{cl}(aC_{1}) \cap \mathrm{cl}(C)), C_{1})$, $a\ind_{C'_{0}}C$. We see that $\mathrm{cl}(aC'_{0}) \subset \mathrm{cl}(aC_{1})$: $aC'_{0} \subseteq \mathrm{cl}(aC_{1}) $, so because $\mathrm{cl}(aC_{1}) \leq \mathbb{M}$, $\mathrm{cl}(aC'_{0}) \subset \mathrm{cl}(aC_{1})$. So $\Gamma(\mathrm{cl}(aC'_{0})) \subseteq \Gamma(\mathrm{cl}(aC_{1})) $, and thus $\Gamma(\mathrm{cl}(aC'_{0}))\cap \Gamma(\mathrm{cl}(C)) \subseteq \Gamma(\mathrm{cl}(aC_{1})) \cap \Gamma(\mathrm{cl}(C)) = \Gamma(\mathrm{cl}(C'_{0})) $, so $\Gamma(\mathrm{cl}(aC'_{0}))\cap \Gamma(\mathrm{cl}(C)) = \Gamma(\mathrm{cl}(C'_{0})) $ (because $A \subseteq B$ implies $\mathrm{cl}(A) \subseteq \mathrm{cl}(B)$ by similar reasoning to before). Now it follows from the first claim above that $\mathrm{span}_{\Gamma(\mathbb{M})}(\mathrm{cl}(S))=\mathrm{span}_{\Gamma(\mathbb{M})}(V(S))$ for any set $S$, so to complete this preliminary step it remains to show that $\mathrm{span}_{\Gamma(\mathbb{M})}(aC_{1}) \cap \mathrm{span}_{\Gamma(\mathbb{M})}(C) = \mathrm{span}_{\Gamma(\mathbb{M})}(C_{1})$. Let $v = a_{1} + c_{1}$, for $a_{1} \in \mathrm{span}_{\Gamma(\mathbb{M})}(aC_{1})$, $c_{1} \in \mathrm{span}_{\Gamma(\mathbb{M})}(C_{1})$, be any element of $\mathrm{span}_{\Gamma(\mathbb{M})}(aC_{1})$. Suppose $v \in \mathrm{span}_{\Gamma(\mathbb{M})}(C)$ as well; then $a_{1} \in  \mathrm{span}_{\Gamma(\mathbb{M})}(CC_{1}) = \mathrm{span}_{\Gamma(\mathbb{M})}(C)=\mathrm{span}_{\Gamma(\mathbb{M})}(\mathrm{cl}(C))$, while $a_{1} \in \mathrm{span}_{\Gamma(\mathbb{M})}(a)=\mathrm{span}_{\Gamma(\mathbb{M})}(\mathrm{cl}(a))$. So $a_{1} \in \mathrm{span}_{\Gamma(\mathbb{M})}(\mathrm{cl}(a)) \cap \mathrm{span}_{\Gamma(\mathbb{M})}(\mathrm{cl}(C)) \subseteq \mathrm{span}_{\Gamma(\mathbb{M})}(C_{1})$, so $v = a_{1} + c_{1} \in \mathrm{span}_{\Gamma(\mathbb{M})}(C_{1})$. Thus $\mathrm{span}_{\Gamma(\mathbb{M})}(aC_{1}) \cap \mathrm{span}_{\Gamma(\mathbb{M})}(C) = \mathrm{span}_{\Gamma(\mathbb{M})}(C_{1})$, as we wanted. This gives us $a\ind_{C'_{0}}C$.

We now finally show that $a \ind_{C_{0}} C$. Here we will use symmetry, base monotonicity, and monotonicity, which we will prove later--of course, without assuming local character. First, $a \ind_{C'_{0}} C$ implies $\mathrm{cl}(aC'_{0}) \ind_{\mathrm{cl(C'_{0})}} \mathrm{cl}(C)$. Moreover, because $C'_{0} \subseteq \mathrm{cl}(C_{0})$, it is the case that $\mathrm{cl}(C_{0}') \subseteq \mathrm{cl}(C_{0})$, while similarly $\mathrm{cl}(C_{0}) \subseteq \mathrm{cl}(C)$. So from our assumption of base monotonicity to be proven later, $\mathrm{cl}(aC'_{0}) \ind_{\mathrm{cl}(C'_{0})} \mathrm{cl}(C)$ implies $\mathrm{cl}(aC'_{0}) \ind_{\mathrm{cl}(C_{0})} \mathrm{cl}(C)$, which implies $a \ind_{\mathrm{cl}(C_{0})} \mathrm{cl}(C)$ by symmetry and monotonicity, which implies $\mathrm{cl}(a \mathrm{cl}(C_{0})) \ind_{\mathrm{cl}(C_{0})} \mathrm{cl}(C)$, which since $\mathrm{cl}(a \mathrm{cl}(C_{0})) = \mathrm{cl}(aC_{0})$ (by similar reasoning to $\mathrm{cl}(C_{0}') \subseteq \mathrm{cl}(C_{0})$ above) implies $\mathrm{cl}(a C_{0}) \ind_{\mathrm{cl}(C_{0})} \mathrm{cl}(C)$. But this implies $a \ind_{C_{0}} C$, as needed to show local character.

\textit{Finite character:} As in the proof of local character, $\mathrm{cl} (BC) = \cup_{b \subset B \mathrm{\: finite}} \mathrm{cl} (bC).$ Now suppose that $A \nind _C B$. From this, finite character follows, but to be specific about the non-independence, one of two things must hold in order to witness this dependence:
\begin{itemize}
    \item There is some $g \in \Gamma (\mathrm{cl} (AC)) \cap \Gamma (\mathrm{cl} (BC)) $ which is not in $\mathrm{cl} (C)$. But then there is some $b \in B$ such that $g \in \mathrm{cl} (bC).$ But then already $g \in \Gamma ( \mathrm{cl} (AC)) \cap  \Gamma (\mathrm{cl} (bC))$ and so $A \nind _C b$. 

    \item There is some element $v \in \span _{Q_{\Gamma(\mathbb{M})}} (V(\mathrm{cl} (AC))) \cap \span _{Q_{\Gamma(\mathbb{M})}} (V(\mathrm{cl} (BC))) \setminus \span _{Q_{\Gamma(\mathbb{M})}} (V(\mathrm{cl} (C))) .$ But then one can write the element $v$ as a finite linear combination of elements in $V( \span _{Q_{\Gamma(\mathbb{M})}} (V(\mathrm{cl} (BC))) )$ with coefficients in $Q_{\Gamma(\mathbb{M})}$. But for each of the vector elements in this span, there are finitely many $b_i \in B$ such that the vector is in the closure of the union of the $b_i$. Take the union over the elements in the linear combination and call the resulting finite tuple $b$. Then it follows that $v \in \span _{Q_{\Gamma(\mathbb{M})}}(V(\mathrm{cl} (Ab)))$ and so $A \nind _C b$. 

\end{itemize}

\textit{Monotonicity:} The relation $\mathrm{cl} (-)$ is monotone by its definition, and so if there is some $B' \subseteq B$ and $\Gamma(\mathrm{cl} (AC) \cap \mathrm{cl} (B'C)) \setminus \Gamma(\mathrm{cl} (C))$ is nonempty then certainly $\Gamma(\mathrm{cl} (AC))) \cap \Gamma(\mathrm{cl} (BC)) \setminus \Gamma(\mathrm{cl} (C))$ is nonempty; similarly for $\mathrm{span}_{Q_{\Gamma(\mathbb{M})}}(V(\mathrm{cl} (AC))) \cap \mathrm{span}_{Q_{\Gamma(\mathbb{M})}}(V(\mathrm{cl} (B'C)))  \setminus \mathrm{span}_{Q_{\Gamma(\mathbb{M})}}(V(\mathrm{cl} (C))) $  and $\mathrm{span}_{Q_{\Gamma(\mathbb{M})}}(V(\mathrm{cl} (AC))) \cap \mathrm{span}_{Q_{\Gamma(\mathbb{M})}}(V(\mathrm{cl} (BC)))  \setminus \mathrm{span}_{Q_{\Gamma(\mathbb{M})}}(V(\mathrm{cl} (C))) $.   Thus $A \nind _C B'$ implies $A \nind _C B $.

\textit{Extension:} Let $A, B, C$ be sets, and we find $A' \equiv_{C} A$ with $A' \ind_{C} B$. By monotonicity and the ``only if" direction of Corollary \ref{quantifier elimination}, we may assume  $\mathrm{cl}(A) = A$, $\mathrm{cl}(B) = B$, and $\mathrm{cl}(C) = C$,  and $C \subset A, B$. Because $C \leq A, B$ and $B \leq \mathbb{M}$, we may find a basis $\langle c_{i} \rangle_{i \in \mathcal{I}}$ for $V(C)$ over $Q_{\Gamma(C)}$, which will also be independent over $Q_{\Gamma(B)}$, $Q_{\Gamma(A)}$ and $Q_{\Gamma(\mathbb{M})}$, and complete that to a basis $\langle c_{i} \rangle_{i \in \mathcal{I}}, \langle b_{i} \rangle_{i \in \mathcal{K}}$ for $B$ over $Q_{\Gamma(B)}$, which will be independent over $Q_{\Gamma(\mathbb{M})}$, and a basis $\langle c_{i} \rangle_{i \in \mathcal{I}}, \langle a_{i} \rangle_{i \in \mathcal{J}}$ for $A$ over $Q_{\Gamma(A)}$. Now construct an abstract structure $D$ extending $B$ as follows. Define $\Gamma(D) := G \sqcup  \Gamma(B)$, where $G$ satisfies the same quantifier-free type in the graph language over $\Gamma(C)$ that $\Gamma(A) \backslash \Gamma(C)$ satisfies over $\Gamma(C)$; the full graph structure on $G \Gamma(B)$ is chosen arbitrarily, subject to this condition. Then define $V(D)= \tilde{B} \oplus_{Q_{G \sqcup  \Gamma(B)}} \bar{A}$, where $\tilde{B}$ is a \textit{$Q_{G \sqcup  \Gamma(B)}$-module} with basis $\langle c_{i} \rangle_{i \in \mathcal{I}}, \langle b_{i} \rangle_{i \in \mathcal{K}}$ extending $V(B)$ ($\tilde{B}$ can be constructed because $\langle c_{i} \rangle_{i \in \mathcal{I}}, \langle b_{i} \rangle_{i \in \mathcal{K}}$ is already a $Q_{B}$-basis for $V(B)$), and $\bar{A}$ is a $Q_{G \sqcup  \Gamma(B)}$-module with basis consisting of new abstract elements $\langle a'_{i} \rangle_{i \in \mathcal{J}}$. Then $(Q_{G \sqcup \Gamma(C)},\mathrm{span}_{Q_{G \sqcup \Gamma(C)}}(\langle c_{i} \rangle_{i \in \mathcal{I}}, \langle a'_{i} \rangle_{i \in \mathcal{J}})) \leq D$, because its $V$-sort's basis $\langle c_{i} \rangle_{i \in \mathcal{I}}, \langle a'_{i} \rangle_{i \in \mathcal{J}}$ remains independent over  $Q_{G \sqcup  \Gamma(B)}= Q_{\Gamma(D)}$. Moreover, $B \leq D$ by construction of $\tilde{B}$. So by Lemma \ref{embedding property for strong inclusions}, there is $\iota: D \hookrightarrow \mathbb{M}$ with $\iota|_{B} = \mathrm{id}_{B}$ and $\iota(D) \leq \mathbb{M}$. Let $A':= \iota(G \sqcup \Gamma(C),\mathrm{span}_{Q_{G \sqcup \Gamma(C)}}(\langle c_{i} \rangle_{i \in \mathcal{I}}, \langle a'_{i} \rangle_{i \in \mathcal{J}}))) $; then $A' \leq \iota(D) \leq \mathbb{M}$, so $A' \leq \mathbb{M}$. There is an isomorphism between $A$ and $A'$ taking $\Gamma(A) = (\Gamma(A\backslash C)) \sqcup\Gamma(C)$ to $\iota(G \sqcup \Gamma(C))$ as enumerated sets, and taking the $Q_{\Gamma(A)}$-basis $\langle c_{i} \rangle_{i \in \mathcal{I}}, \langle a_{i} \rangle_{i \in \mathcal{J}}$ for $V(A)$ to the $Q_{\Gamma(A')}$-basis $\langle c_{i} \rangle_{i \in \mathcal{I}}, \langle \iota(a'_{i}) \rangle_{i \in \mathcal{J}}$ for $V(A')$; thus since $A, A' \leq \mathbb{M}$ and this isomorphism fixes $C$ pointwise, $A \equiv_{C} A'$ by Corollary \ref{quantifier elimination}. It remains to show $A' \ind_{C} B$--note here that $A= \mathrm{cl}(A)$, $B= \mathrm{cl}(B)$ and $C= \mathrm{cl}(C)$. By construction, $\Gamma(A') \cap \Gamma(B)= \Gamma(C)$. Moreover, $\langle c_{i} \rangle_{i \in \mathcal{I}}, \langle \iota(a'_{i}) \rangle_{i \in \mathcal{J}}, \langle b_{i} \rangle_{i \in \mathcal{K}}$ are independent over $Q_{\Gamma(\iota(D))}$ by construction, so because $D \leq \mathbb{M}$, they are independent over $Q_{\Gamma(\mathbb{M})}$. Thus $\mathrm{span}_{Q_{\Gamma(\mathbb{M})}}(V(A')) \cap \mathrm{span}_{Q_{\Gamma(\mathbb{M})}}(V(B)) = \mathrm{span}_{Q_{\Gamma(\mathbb{M})}}(V(C))$, because $\mathrm{span}_{Q_{\Gamma(\mathbb{M})}}(\langle c_{i} \rangle_{i \in \mathcal{I}}, \langle \iota(a'_{i}) \rangle_{i \in \mathcal{J}}) \cap \mathrm{span}_{Q_{\Gamma(\mathbb{M})}}(\langle c_{i} \rangle_{i \in \mathcal{I}}, \langle b_{i} \rangle_{i \in \mathcal{K}}) = \mathrm{span}_{Q_{\Gamma(\mathbb{M})}}(\langle c_{i} \rangle_{i \in \mathcal{I}})$. This completes the argument that $A' \ind_{C} B$.

\textit{Symmetry:} This follows directly from the definition.

\textit{Transitivity:} Let $A \ind_{D} C$ and $A \ind_{C} B$ with $D \subseteq C \subseteq B$; we show that $A \ind_{D} B$. As with extension, we may assume that $\mathrm{cl}(A) = A$, $\mathrm{cl}(B) = B$, $\mathrm{cl}(C) = C$, and $\mathrm{cl}(D) = D$, and that $D \subseteq A$.

To show $A \ind_{D} B$, let us first show that $\Gamma(A) \cap \Gamma(B) = \Gamma(D)$. $A \ind_{C} B$ implies that $\Gamma(\mathrm{cl}(AC)) \cap \Gamma(B)=\Gamma(C)$, so $\Gamma(A) \cap \Gamma(B) \subseteq \Gamma(C)$. But $\Gamma(A) \cap \Gamma(C)= \Gamma(D) $, so $\Gamma(A) \cap \Gamma(B)= \Gamma(D) $. Now let us show that $\mathrm{span}_{\Gamma(\mathbb{M})}(V(A)) \cap \mathrm{span}_{\Gamma(\mathbb{M})}(V(B)) = \mathrm{span}_{\Gamma(\mathbb{M})}(V(D))$. The relations $A \ind_{C} B$ and $A \ind_{D} C$ give $\mathrm{span}_{\Gamma(\mathbb{M})}(V(\mathrm{cl}(AC)) \cap \mathrm{span}_{\Gamma(\mathbb{M})}(V(B)) = \mathrm{span}_{\Gamma(\mathbb{M})}(V(C))$ and $\mathrm{span}_{\Gamma(\mathbb{M})}(V(A)) \cap \mathrm{span}_{\Gamma(\mathbb{M})}(V(C)) = \mathrm{span}_{\Gamma(\mathbb{M})}(V(D))$. But the first of these implies $\mathrm{span}_{\Gamma(\mathbb{M})}(V(A)) \cap \mathrm{span}_{\Gamma(\mathbb{M})}(V(B)) \subseteq \mathrm{span}_{\Gamma(\mathbb{M})}(V(C))$, which with the second gives $\mathrm{span}_{\Gamma(\mathbb{M})}(V(A)) \cap \mathrm{span}_{\Gamma(\mathbb{M})}(V(B)) = \mathrm{span}_{\Gamma(\mathbb{M})}(V(D))$.

\textit{Base monotonicity:}

Let $A \ind_{D} B$ with $D \subseteq C \subseteq B$; we show that $A \ind_{C} B$. As with extension, we may assume that $\mathrm{cl}(A) = A$, $\mathrm{cl}(B) = B$, $\mathrm{cl}(C) = C$, and $\mathrm{cl}(D) = D$, and that $D \subseteq A$.

We first observe that $A \ind_{D} C$, a consequence of $A \ind_{D} B$ (by monotonicity, which does not use this)  implies that $\Gamma(\mathrm{cl}(AC))=\Gamma(A) \cup \Gamma(C)$, and that $\mathrm{span}_{\Gamma(\mathbb{M})}(V(\mathrm{cl}(AC)))=\mathrm{span}_{\Gamma(\mathbb{M})}(V(A)V(C))$. First, we may let $\langle d_{i} \rangle_{i \in \mathcal{I}}$ be a basis for $V(D)$ over $Q_{\Gamma(D)}$; since $D \leq \mathbb{M}$ this remains independent over $Q_{\Gamma(\mathbb{M})}$, and particularly over $Q_{\Gamma(A)}$ and $Q_{\Gamma(C)}$. We may then complete it to a basis $\langle d_{i} \rangle_{i \in \mathcal{I}} \langle a_{i} \rangle_{i \in \mathcal{J}}$ for $V(A)$ over $Q_{\Gamma(A)}$, which as $A \leq \mathbb{M}$ remains independent over $Q_{\Gamma(\mathbb{M})}$, and a basis $\langle d_{i} \rangle_{i \in \mathcal{I}} \langle c_{i} \rangle_{i \in \mathcal{K}}$ for $V(C)$ over $Q_{\Gamma(C)}$, which as $C \leq \mathbb{M}$ remains independent over $Q_{\Gamma(\mathbb{M})}$. Because $A \ind_{D} C$ implies $\mathrm{span}_{\Gamma(\mathbb{M})}(V(A)) \cap \mathrm{span}_{\Gamma(\mathbb{M})}(V(C)) = \mathrm{span}_{\Gamma(\mathbb{M}}(V(D))$, $\mathrm{span}_{\Gamma(\mathbb{M})}(\langle d_{i} \rangle_{i \in \mathcal{I}}, \langle a_{i} \rangle_{i \in \mathcal{J}}) \cap \mathrm{span}_{\Gamma(\mathbb{M})}(\langle d_{i} \rangle_{i \in \mathcal{I}}, \langle c_{i} \rangle_{i \in \mathcal{K}}) = \mathrm{span}_{\Gamma(\mathbb{M})}(\langle d_{i} \rangle_{i \in \mathcal{I}})$, so $\langle d_{i} \rangle_{i \in \mathcal{I}}, \langle a_{i} \rangle_{i \in \mathcal{J}}, \langle c_{i} \rangle_{i \in \mathcal{K}} $ must be independent over $Q_{\Gamma(\mathbb{M})}$. Then $AC \subset (\Gamma(A) \sqcup \Gamma(C), \mathrm{span}_{Q_{\Gamma(A)\cup \Gamma(C)}}(\langle d_{i} \rangle_{i \in \mathcal{I}}, \langle a_{i} \rangle_{i \in \mathcal{J}}, \langle c_{i} \rangle_{i \in \mathcal{K}}) \leq \mathbb{M}$ because $\langle d_{i} \rangle_{i \in \mathcal{I}}, \langle a_{i} \rangle_{i \in \mathcal{J}}, \langle c_{i} \rangle_{i \in \mathcal{K}}$ is a basis for the $V$-sort over $Q_{\Gamma(A)\cup \Gamma(C)}$ and independent over $Q_{\Gamma(\mathbb{M})}$, and

$$\mathrm{cl}(AC) \supseteq (\Gamma(A) \cup \Gamma(C), \mathrm{span}_{Q_{\Gamma(A)\sqcup \Gamma(C)}}(\langle d_{i} \rangle_{i \in \mathcal{I}}, \langle a_{i} \rangle_{i \in \mathcal{J}}, \langle c_{i} \rangle_{i \in \mathcal{K}})),$$

so 

$$\mathrm{cl}(AC) = (\Gamma(A) \cup \Gamma(C), \mathrm{span}_{Q_{\Gamma(A)\cup \Gamma(C)}}(\langle d_{i} \rangle_{i \in \mathcal{I}}, \langle a_{i} \rangle_{i \in \mathcal{J}}, \langle c_{i} \rangle_{i \in \mathcal{K}}))$$

and particularly $\Gamma(\mathrm{cl}(AC))=\Gamma(A) \cup \Gamma(C)$ and 

$$\mathrm{span}_{\Gamma(\mathbb{M})}(V(\mathrm{cl}(AC)))=\mathrm{span}_{\Gamma(\mathbb{M})}(\langle d_{i} \rangle_{i \in \mathcal{I}}, \langle a_{i} \rangle_{i \in \mathcal{J}}, \langle c_{i} \rangle_{i \in \mathcal{K}})=\mathrm{span}_{\Gamma(\mathbb{M})}(V(A)V(C)).$$

Now we show, from $A \ind_{D} B$, that $A \ind_{C} B$. First, we must show that $\Gamma(\mathrm{cl}(AC)) \cap \Gamma(B)=\Gamma(C)$. But by $\Gamma(\mathrm{cl}(AC))=\Gamma(A) \cup \Gamma(C)$, this is the same thing as to show that $(\Gamma(A) \cup \Gamma(C)) \cap \Gamma(B)=\Gamma(C)$, which follows from $\Gamma(A) \cap \Gamma(B) = \Gamma(D)$. Second, we must show that $\mathrm{span}_{\Gamma(\mathbb{M})}(V(\mathrm{cl}(AC)) \cap \mathrm{span}_{\Gamma(\mathbb{M})}(V(B)) = \mathrm{span}_{\Gamma(\mathbb{M})}(V(C))$. But by $\mathrm{span}_{\Gamma(\mathbb{M})}(V(\mathrm{cl}(AC)))=\mathrm{span}_{\Gamma(\mathbb{M})}(V(A)V(C))$, this is the same thing as $\mathrm{span}_{\Gamma(\mathbb{M})}(V(A)V(C)) \cap \mathrm{span}_{\Gamma(\mathbb{M})}(V(B)) = \mathrm{span}_{\Gamma(\mathbb{M})}(V(C))$. We know that $\mathrm{span}_{\Gamma(\mathbb{M})}(V(A)) \cap \mathrm{span}_{\Gamma(\mathbb{M})}(V(B)) = \mathrm{span}_{\Gamma(\mathbb{M})}(V(D))$; let $v = a+ c \in \mathrm{span}_{\Gamma(\mathbb{M})}(V(A)V(C)) $ for $a \in \mathrm{span}_{\Gamma(\mathbb{M})}(V(A)), c\in  \mathrm{span}_{\Gamma(\mathbb{M})}(V(C))$. Suppose that $v \in \mathrm{span}_{\Gamma(\mathbb{M})}(V(B))$ as well; we show $v \in \mathrm{span}_{\Gamma(\mathbb{M})}(V(C))$. Since $a+ c \in \mathrm{span}_{\Gamma(\mathbb{M})}(V(B)) $, subtracting, $a \in \mathrm{span}_{\Gamma(\mathbb{M})}(V(B))$. So $a \in \mathrm{span}_{\Gamma(\mathbb{M})}(V(B)) \cap \mathrm{span}_{\Gamma(\mathbb{M})}(V(A)) = \mathrm{span}_{\Gamma(\mathbb{M})}(V(D)) \subseteq \mathrm{span}_{\Gamma(\mathbb{M})}(V(C))$. So $v \in a+ c \in \mathrm{span}_{\Gamma(\mathbb{M})}(V(C))$, as desired.

\textit{The independence theorem:} 

We must prove the following proposition:

\begin{prop}
    If $M \prec \mathbb{M}$, $A_{1} \ind_{M} B$, $A_{2} \ind_{M} C$, $C \ind_{M} B$, and $A_{1} \equiv_{M} A_{2}$, then there is some $A \ind_{M} BC$ with $A \equiv_{MB} A_{1}$ and $A \equiv_{MC} A_{2}$.
\end{prop}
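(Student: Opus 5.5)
We will construct the required $A$ as in the proof of extension: build an abstract structure $D$ containing $\mathrm{cl}(MBC)$ together with a copy of $A_1$ glued appropriately, embed it strongly into $\mathbb{M}$ via Lemma \ref{embedding property for strong inclusions}, and read off the types via Corollary \ref{quantifier elimination}.

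First we normalize. By the invariance and monotonicity already established, and the description of types via closures, we may assume $A_1 = \mathrm{cl}(A_1M)$, $A_2=\mathrm{cl}(A_2M)$, $B = \mathrm{cl}(BM)$, $C=\mathrm{cl}(CM)$, with $M\subseteq A_1,A_2,B,C$. Note $M\leq\mathbb{M}$, since a model is closed: $M$ is itself a $Q_{\Gamma(M)}$-module and is an elementary substructure, so linear dependence over $Q_{\Gamma(\mathbb{M})}$ is witnessed with coefficients of support in $\Gamma(M)$. Fix a $Q_{\Gamma(M)}$-basis $\langle m_i\rangle$ of $V(M)$. As in base monotonicity, $C\ind_M B$ gives $\mathrm{cl}(BC)=(\Gamma(B)\cup\Gamma(C),\mathrm{span}_{Q_{\Gamma(B)\cup\Gamma(C)}}(\langle m_i\rangle,\langle b_k\rangle,\langle c_l\rangle))$ with this union of bases independent over $Q_{\Gamma(\mathbb{M})}$, and $\Gamma(B)\cap\Gamma(C)=\Gamma(M)$. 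Let $\sigma:A_1\to A_2$ be the isomorphism over $M$ from Corollary \ref{quantifier elimination}. Extend $\langle m_i\rangle$ to a basis $\langle m_i\rangle,\langle a_j\rangle$ of $V(A_1)$ over $Q_{\Gamma(A_1)}$; then $\langle m_i\rangle,\langle \sigma a_j\rangle$ is a basis of $V(A_2)$.

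Next we build $D$. Let $\Gamma(D)=G\sqcup\Gamma(B)\cup\Gamma(C)$, where $G$ is a new copy of $\Gamma(A_1)\setminus\Gamma(M)$. We prescribe edges between $G$ and $\Gamma(B)$ as $\Gamma(A_1)$ relates to $\Gamma(B)$, and edges between $G$ and $\Gamma(C)$ as $\Gamma(A_2)$ relates to $\Gamma(C)$, transported via $\sigma$. This is consistent because $\Gamma(A_1)\cap\Gamma(B)=\Gamma(M)=\Gamma(A_2)\cap\Gamma(C)$ and $\Gamma(B)\cap\Gamma(C)=\Gamma(M)$, so no pair of vertices receives conflicting instructions. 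Here the bipartite random graph imposes only quantifier-free constraints, which is the point where free amalgamation in the graph sort is used. Then let $V(D)$ be the free $Q_{\Gamma(D)}$-module on $\langle m_i\rangle,\langle b_k\rangle,\langle c_l\rangle,\langle a'_j\rangle$, with $a'_j$ new. Each of the following sits strongly in $D$, since its basis is a subset of a $Q_{\Gamma(D)}$-basis:
\begin{itemize}
\item $\mathrm{cl}(BC)$;
\item $A^B:=(G\cup\Gamma(M),\mathrm{span}(\langle m_i\rangle,\langle a'_j\rangle))$;
\item $\mathrm{cl}(A^B B)$, the span over $G\cup\Gamma(B)$ of $m,b,a'$, which is isomorphic over $B$ to $\mathrm{cl}(A_1B)$;
\item $\mathrm{cl}(A^BC)$, which is isomorphic over $C$ to $\mathrm{cl}(A_2C)$.
\end{itemize}
The isomorphisms hold because $A_1\ind_M B$ gives, exactly as in base monotonicity, that $\mathrm{cl}(A_1B)$ is the span over $\Gamma(A_1)\cup\Gamma(B)$ of the independent family $m,a,b$, with the graph on $\Gamma(A_1)\cup\Gamma(B)$ copied into $D$. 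The same argument applies to $A_2,C$.

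Finally we embed and check. Apply Lemma \ref{embedding property for strong inclusions} to $\mathrm{cl}(BC)\leq D$, noting $\mathrm{cl}(BC)\leq\mathbb{M}$, to get $\iota:D\to\mathbb{M}$ over $\mathrm{cl}(BC)$ with $\iota(D)\leq\mathbb{M}$, and set $A:=\iota(A^B)$. Since $\iota(\mathrm{cl}(A^BB))\leq\mathbb{M}$ is isomorphic over $B$ to $\mathrm{cl}(A_1B)\leq\mathbb{M}$, Corollary \ref{quantifier elimination} gives $A\equiv_{MB}A_1$, and likewise $A\equiv_{MC}A_2$. Independence $A\ind_M BC$ holds because $\Gamma(A)\cap\Gamma(\mathrm{cl}(BC))=\Gamma(M)$ by construction, and the family $m,a',b,c$ is $Q_{\Gamma(\mathbb{M})}$-independent by $\iota(D)\leq\mathbb{M}$, so the spans intersect in $\mathrm{span}(V(M))$.

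The main obstacle is the middle step: verifying that the subobjects of $D$ really are closures with the right isomorphism types. In particular, the description of $\mathrm{cl}(A_1B)$ as a free span requires $M\leq\mathbb{M}$ and the computation from base monotonicity. The noncommutativity causes no trouble there, because everything is expressed through bases being free over $Q_{\Gamma(D)}$, with no interaction between vectors from different sides. Notably, this argument never uses that $M$ is a model beyond $M$ being closed; this is consistent with forking there being nonstationary yet amalgamable.
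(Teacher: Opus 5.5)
Your proposal is correct and is essentially the paper's argument. Both proofs freely amalgamate the graph sorts over $\Gamma(M)$, adjoin a fresh $Q_{\Gamma(\mathbb{M})}$-independent copy of the basis of $A_1$ over $M$, and read off the types from the base-monotonicity description of closures together with Corollary \ref{quantifier elimination}. The only difference is packaging: you build the amalgam $D$ abstractly and realize it via Lemma \ref{embedding property for strong inclusions}, as in the paper's proof of extension, whereas the paper chooses the graph embedding $\sigma$ and the new basis $B^{*}_{A_1}$ directly inside $\mathbb{M}$ by saturation.
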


\begin{proof}
    Suppose that we have closed sets $D, E, F \leq \m M$ with $F \subseteq D \cap E$ and $D \ind _F E$. Take a $Q_{ \Gamma (F)}$-basis of $V(F)$, call it $B$. Then take an extension of this basis, $B \cup B_D$ to be a basis of $V(D)$ over the ring $Q _{\Gamma (D)}$. Similarly, let $B \cup B_E$ be a basis of $V( E)$ over $Q _{\Gamma (E)}$.

        Then we claim that $B \cup B_E \cup B_D$ is an independent set over $Q_{\Gamma (\m M)}$, and that the closure $\mathrm{cl}(DE) = (\Gamma (D) \cup \Gamma (E), \span _{Q_{\Gamma (D) \cup \Gamma (E)}} (B \cup B_E  \cup B_D))$. But this is just as in the proof of base monotonicity.



By our assumptions and definition of $\ind$, we have 
$$\mathrm{cl}(MA_1) \ind_M \mathrm{cl} (MB_0) , \, \mathrm{cl}(MA_2) \ind _M \mathrm{cl}(MC), \, \mathrm{cl}(MC) \ind _M \mathrm{cl} (MB).$$ (Here, note that $\mathrm{cl}(M) = M$, because $M \prec \mathbb{M}$ and, by definition of $\leq$, this implies $M \leq \mathbb{M}$.) By our elementary equivalence assumption, we have an isomorphism $f: \mathrm{cl}(MA_1) \rightarrow \mathrm{cl}(MA_2)$ which fixes $M$ pointwise and sends $A_1 $ to $A_2$. Pick a $Q _ { \Gamma (M)}$ basis $B_M$ of $V(M)$. Extend this basis to bases: 
\begin{enumerate}
    \item $B_M \cup B_{A_1} $ of $V(\mathrm{cl}(MA_1))$ over $Q_{\Gamma (\mathrm{cl}(MA_1))}$
    \item $B_M \cup f(B_{A_1}) $ of $V(\mathrm{cl}(MA_2))$ over $Q_{\Gamma (\mathrm{cl}(MA_2))}$
    \item $B_M \cup B_0 $ of $V(\mathrm{cl}(MB))$ over $Q_{\Gamma (\mathrm{cl}(MB))}$
    \item $B_M \cup C_0 $ of $V(\mathrm{cl}(MC))$ over $Q_{\Gamma (\mathrm{cl}(MC))}$
\end{enumerate}
By the hypothesis; and our claim following from the arguments from the proof of base monotonicity, we have the following: $B_M \cup B_{A_1} \cup B_0 $, $B_M \cup f(B_{A_1}) \cup C_0 $ and $B_M \cup C_0 \cup B_0 $ are each $Q_{\Gamma (\m M)}$-independent sets. 

Pick an embedding of $\Gamma (\mathrm{cl}( MA_1))$ into $\Gamma(\m M)$ which fixes $\Gamma (M)$ pointwise. Call it $\sigma$. And make sure that we have the property that $\sigma (\Gamma ( \mathrm{cl}(MA_1)))$ is disjoint from $\Gamma (\mathrm{cl}(MB))$ and $\Gamma(\mathrm{cl}(MC))$ except for $\Gamma (M)$. This follows via saturation, but we want a bit more. Now for $b \in \Gamma ( \mathrm{cl}(MB))$, $c \in \Gamma (\mathrm{cl}(MC))$ and $a \in \Gamma (\mathrm{cl}(MA_1))$ we also demand that
$$E(\sigma ( a ), b ) \leftrightarrow E(a, b)$$ and also that $$E(\sigma (a), c ) \leftrightarrow E( f(a) , c ). $$ This all agrees on the portion which overlaps, since $\mathrm{cl}(MC) \ind _M \mathrm{cl}(MB)$ means that the graph portions of $\mathrm{cl}(MC)$ and $\mathrm{cl}(MB)$ are disjoint except for the graph portion of $M$. Then by saturation, we have such a $\sigma.$  

This $\sigma$ induces an embedding of $Q_{\Gamma ( \mathrm{cl}(MA_1))}$ into $Q_{\sigma(\Gamma (\mathrm{cl}((MA_1))))}$. Now we want to extend $\sigma $ to the module sort. All of the bases which have appeared so far are small relative to the saturation of $\m M$, and so we can pick some $B_{A_1}^*$ with the property that $B_M \cup B_0 \cup C_0 \cup B_{A_1}^*$ is independent over $Q_{\Gamma (\m M)}$. 

Now $(\sigma (\Gamma (\mathrm{cl}(MA_1)))),\span _ {Q_{\sigma (\Gamma (\mathrm{cl}(MA_1)))}} (B_M \cup B_{A_1}^*) $
is a strong closed substructure of $\m M$. 
Define $\sigma^*$
a map from $\mathrm{cl}(MA_1)$
to $\left( \sigma (\Gamma (\mathrm{cl}(MA_1))),\span_{Q_{\sigma (\Gamma (MA_1)))}} (B_M \cup B_{A_1}^*) \right)$ via $\sigma $ on the graph sort. On the  module sort, note that $\sigma $ induces an isomorphism of the associated division rings, and we extend by linearity the above given map of the bases. Call the image of $A_1$ under this map $A_1^*.$ 

The closed structure $\left( \sigma (\Gamma (\mathrm{cl}(MA_1)))),\span _ {Q_{\sigma (\Gamma (\mathrm{cl}(MA_1)))}} (B_M \cup B_{A_1}^* ) \right)$ is the closure of $MA_{1}^*.$ By the claim at the beginning of this proof, $\mathrm{cl}(MA_1B) $ has graph sort $\Gamma ( \mathrm{cl}(MA_1)) \cup \Gamma (\mathrm{cl}(MB))$ and the module sort has basis $B_M \cup B_{A_1} \cup B_0.$ 
Similarly, $\mathrm{cl}(MA_1^*B)$ has graph sort $\sigma (\Gamma (\mathrm{cl}(MA_1))) \cup \Gamma (\mathrm{cl}(MB))$ and basis $B_M \cup B_{A_1}^* \cup B_0$. 
Putting these together, $\sigma $ extended by the identity on $\Gamma (\mathrm{cl}(MB))$ is a graph isomorphism between the graphs of these two substructures, while the above defined map of bases and its extension by linearity gives an isomorphism of the module sorts. Therefore, 
$$\mathrm{cl}(MA_1B) \cong _{MB} \mathrm{cl}(MA_1^*B)$$ and now we know by our characterization of types that $A_1 \equiv _{MB} A_1^*$. The analogous argument gives that $A_1^* \equiv _{MC} A_2$. 

The independence of $A_1^*$ from $BC$ over $M$ follows from the fact that $B_M \cup B_0 \cup C_0 \cup B_{A_1}^*$ is independent and the graphs $\Gamma(\mathrm{cl}(MA_1^*)) $ and $\Gamma ( \mathrm{cl}(MBC))$ have intersection $\Gamma (M)$. 
\end{proof}

Having proven that $\ind$ satisfies the hypotheses of Fact \ref{Kim-Pillay theorem}, we conclude that forking-independence coincides with $\ind$ in $T^{\neg \mathrm{StFk}}$, and $T^{\neg \mathrm{StFk}}$ is simple.

\subsection{Instability of the independence relation in $T^{\neg\mathrm{StFk}}$}\label{instability of the independence relation}

Let $v_0, \ldots v_3 \in V(\mathbb{M})$ be linearly independent over $Q_{\Gamma(\mathbb M)}$. For each $i \in I( \mathbb M) $ and $j \in J ( \mathbb M)$ we define the following: 
\begin{eqnarray}
u_i:=v_0+X_i v_1,\\
t_i:=v_2+X_i v_3,\\
w_j:=v_0+X_j v_2,\\
z_j:=v_1+X_j v_3,
\end{eqnarray}

\begin{lemma}
    Let $i \in I ( \m M) $ and $j \in J( \m M)$. Let $U_i:= \span_{Q_{\Gamma( \m M) }} ( u_i , t_i )$ and $W_j = \span_{Q_{\Gamma( \m M) }} ( w_j , z_j )$. When $\neg E(i,j)$ holds, $U_i \cap W_j = \{ 0 \}$. When $E(i,j)$ holds, we have that $U_i \cap W_j = \span_{Q_{\Gamma( \m M) }} ( u_i + X_jt_i ).$ 

    It follows that if one takes $\alpha_i = ( u_i , t_i )$ and $\beta _j = ( w_j , z_j )$ then $\alpha_ i \ind \beta _j $ if and only if $\neg E(i,j)$. 
\end{lemma}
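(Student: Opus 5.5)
The plan is a direct coefficient comparison in the basis $v_0,\ldots,v_3$, followed by an identification of the closures of $\alpha_i$ and $\beta_j$ so that the definition of $\ind$ reduces to the computation of $U_i \cap W_j$.

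\textbf{Step 1: computing $U_i \cap W_j$.} Take a general element $a u_i + b t_i$ of $U_i$ and a general element $c w_j + d z_j$ of $W_j$, with $a,b,c,d \in Q_{\Gamma(\mathbb{M})}$. Expanding in $v_0,\ldots,v_3$ gives
$$a u_i + b t_i = a v_0 + aX_i v_1 + b v_2 + bX_i v_3, \qquad c w_j + d z_j = c v_0 + d v_1 + cX_j v_2 + dX_j v_3.$$
The $v_k$ are linearly independent over $Q_{\Gamma(\mathbb{M})}$, so equality forces
$$a=c,\qquad aX_i=d,\qquad b=cX_j,\qquad bX_i=dX_j.$$
Substituting the first three into the fourth gives $aX_jX_i = aX_iX_j$.

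If $E(i,j)$ holds, then $X_iX_j = X_jX_i$, so the fourth equation is automatic. The common elements are then exactly $a(u_i + X_j t_i)$ for $a \in Q_{\Gamma(\mathbb{M})}$. This element is nonzero for $a=1$, since its $v_0$-coefficient is $1$.

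If $\neg E(i,j)$ holds, then $X_jX_i = -X_iX_j$, so the condition becomes $2aX_iX_j = 0$. Since $Q_{\Gamma(\mathbb{M})}$ is a division ring of characteristic $0$ and $X_iX_j \neq 0$, we get $a=0$. Hence $b=c=d=0$ and $U_i \cap W_j = \{0\}$.

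\textbf{Step 2: identifying the closures.} Regard $\alpha_i$ and $\beta_j$ as tuples of vectors only, and compute independence over $\emptyset$.

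First, $\mathrm{cl}(\emptyset) = (\emptyset, \{0\})$. It is trivially $\leq \mathbb{M}$, and it is contained in every closed set.

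Next, I claim $\mathrm{cl}(\alpha_i) = (\emptyset, \mathbb{Q}u_i + \mathbb{Q}t_i)$, using $Q_\emptyset = \mathbb{Q}$. The pair $u_i, t_i$ is $Q_{\Gamma(\mathbb{M})}$-independent, because its coordinate matrix in $v_0,\ldots,v_3$ has the entries $1$ and $1$ in the $v_0$ and $v_2$ positions. So a $\mathbb{Q}$-basis of this structure remains independent over $Q_{\Gamma(\mathbb{M})}$, and by Proposition~\ref{strong inclusion} and the definition of $\leq$ it is a strong substructure of $\mathbb{M}$. It is clearly the smallest one containing $\alpha_i$. The same argument gives $\mathrm{cl}(\beta_j) = (\emptyset, \mathbb{Q}w_j + \mathbb{Q}z_j)$.

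As a result, the graph condition $\Gamma(\mathrm{cl}(\alpha_i)) \cap \Gamma(\mathrm{cl}(\beta_j)) = \Gamma(\mathrm{cl}(\emptyset)) = \emptyset$ holds automatically. Moreover $\mathrm{span}_{Q_{\Gamma(\mathbb{M})}}(V(\mathrm{cl}(\alpha_i))) = U_i$ and $\mathrm{span}_{Q_{\Gamma(\mathbb{M})}}(V(\mathrm{cl}(\beta_j))) = W_j$.

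\textbf{Step 3: conclusion.} By the definition of $\ind$, $\alpha_i \ind \beta_j$ holds if and only if $U_i \cap W_j = \{0\}$. By Step 1 this happens exactly when $\neg E(i,j)$.

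\textbf{Main obstacle.} Step 1 is mechanical. The point needing care is Step 2: I have to check, against the characterization in the proof of Proposition~\ref{closure with respect to strong inclusion}, that no graph vertices, and in particular not $i$ or $j$, enter the closures. If they did, the graph-intersection clause would interfere with the equivalence. The independence of $u_i, t_i$ (respectively $w_j, z_j$) over $Q_{\Gamma(\mathbb{M})}$ is exactly what rules this out.
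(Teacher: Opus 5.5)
Your proposal is correct and follows the paper's proof essentially step for step. Both arguments compare coefficients in the basis $v_0,\ldots,v_3$ to reduce to $aX_jX_i = aX_iX_j$, and both identify $\mathrm{cl}(\alpha_i) = (\emptyset, \mathrm{span}_{\mathbb{Q}}(u_i,t_i))$ (and likewise for $\beta_j$) from the $Q_{\Gamma(\mathbb{M})}$-independence of $u_i, t_i$. You are slightly more explicit than the paper in two places: you show that every common element in the $E(i,j)$ case is a left multiple of $u_i + X_j t_i$ (the paper only exhibits $u_i + X_j t_i = w_j + X_i z_j$), and you compute $\mathrm{cl}(\emptyset) = (\emptyset,\{0\})$.
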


\begin{proof}
    Suppose that for some $i, j$ and some $r,s,p,q \in Q_{\Gamma( \m M) }$ we have that 
    $$r u_i + s t_i  = p w_j + q z_j .$$
Then note that by the above definitions, we have that 
$$r (v_0+X_i v_1) + s (v_2+X_i v_3)  = p (v_0+X_j v_2) + q (v_1+X_j v_3).$$
Now, noting that the $v_i$ are linearly independent, we must have that:
\begin{eqnarray}
    r=p \\
    rX_i=q \\
    s = p X_j \\
    sX_i = q X_j.
\end{eqnarray}
But then by substitutions, one can see that $rX_j X_i = r X_i X_j $ so $r (X_j X_i -X_i X_j) = 0$. Now, if $\neg E(i,j)$, then we have that $X_j X_i -X_i X_j = 2 X_j X_i \neq 0$, so it must be that $r = 0$. But then by the above equations, it follows that each of $s,p,q$ must also be zero. It thus follows that $U_i \cap W_j = \{0 \}$. 

On the other hand, if $E(i,j)$ holds, then $X_i X_j = X_j X_i$ and so we have that 
\begin{eqnarray*}
    u_i + X_j t_i = v_0 + X_i v_1 +X_j v_2 + X_j X_i v_3 \\
    = v_0 +X_j v_2 + X_i v_1 +X_i X_j v_3 \\ 
    = w_j + X_i z_j.
\end{eqnarray*}

Note that both sides are nonzero by linear independence of $v_0, v_2, v_3, v_3$. Now $\Gamma (\mathrm{cl} (\alpha_i)) = \emptyset $ and $V (\mathrm{cl} (\alpha_i)) = \span_ {\mathbb{Q}} (u_i, t_i)$. (We know $(\emptyset, \span_ {\mathbb{Q}} (u_i, t_i)) \leq \mathbb{M}$ because $u_i, t_i$ form a basis for $\span_ {\mathbb{Q}} (u_i, t_i))$ over $Q_{\emptyset}=\mathbb{Q}$ while remaining independent over $Q_{\Gamma(\mathbb{M})}$.) Similarly, 
$\Gamma (\mathrm{cl} (\beta_j)) = \emptyset $ and $V (\mathrm{cl} (\alpha_j)) = \span_ {\mathbb{Q}} (w_j, z_j)$. But then it follows that $\alpha_i \ind \beta_{j}$ if and only if $U_i \cap W_j = \{ 0 \}$ and from the above calculation this occurs precisely when $\neg E(i,j)$ holds. 

\end{proof}

Now by $\kappa = \lambda^{+}$-saturatedness of $\mathbb{M}$, we may find $\{i_{\gamma}, j_{\gamma}\}_{\gamma < \lambda}$ with $i_{\gamma} \in I(\mathbb{M})$, $j_{\gamma} \in J(\mathbb{M})$ and $E(i_{\gamma_{1}}, j_{\gamma_{2}})$ if and only if $\gamma_{2} < \gamma_{1}$. Then with $\alpha_{i}, \beta_{i}$ as in the lemma, $\alpha_{i_{\gamma_{1}}} \ind \beta_{j_{\gamma_{2}}}$ if and only if $\gamma_{2} < \gamma_{1}$. Recall that $\lambda$ was large enough to apply the Erdős–Rado theorem, so applying this, we obtain an indiscernible sequence $\{a_{i}, b_{i}\}_{i < \omega}$ such that $a_{i} \nind b_{j}$ if and only if $i < j$. In other words, the forking relation is unstable (in the sense of \cite{PW13}, Section 4).

As explicated in Remark 7.5 of \cite{KoponenConjecture}, this implies that $T^{\neg \mathrm{StFk}}$ is a counterexample to the stable forking conjecture. We recall the proof: let $ab \models \mathrm{tp}(a_{i}b_{j})$ for $i < j$, so $a \nind b$. We show that there is no stable formula $\varphi(x, y)$ with $a \models \varphi(x, b)$ and such that $\varphi(x, b)$ forks over the $\emptyset$. Otherwise, because $a \models \varphi(x, b)$ for  $ab \models \mathrm{tp}(a_{i}b_{j})$ with $i < j$ and $\{a_{i}, b_{i}\}_{i < \omega}$ is indiscernible, $\models \varphi(a_{i}, b_{j})$ for $i < j$. On the other hand, because $a_{i} \ind b_{j}$ for $i \geq j$ and $\varphi(x, b)$, and hence $\varphi(x, b_j))$, forks over $\emptyset$, $\models \neg \varphi(a_{i}, b_{j})$ for $i \geq j$. So $\varphi(x, y)$ is unstable, a contradiction. We conclude that $T^{\neg\mathrm{StFk}}$ is a counterexample to the stable forking conjecture.

\begin{remark}
    In the proof of local character in verifying the hypotheses of Fact \ref{Kim-Pillay theorem}, we showed more: for any finite set $a$ and set $C$, there is some \textit{finite} $C_{0} \subset C$ such that $a \ind_{C_{0}} C$ (local character only demands that $C_{0}$, in the case of a theory with countable language, be countable.) Since $\ind$ coincides with forking-independence, $T^{\neg\mathrm{StFk}}$ is even a \textit{supersimple} counterexample to the stable forking conjecture. However, some main cases of the stable forking conjecture remain open:

    \begin{question}
    
        \begin{itemize}
            \item Do supersimple theories of finite $\mathrm{SU}$-rank satisfy the conclusion of the stable forking conjecture?

            \begin{itemize}
                \item Is the conclusion of the stable forking conjecture satisfied for $a, b  \subset \mathbb{M} \models T$, for a supersimple theory $T$ and $a\nind b$, where $\mathrm{SU}(a)=\mathrm{SU}(b) = 3$?\footnote{As stated in the introduction, Brower proves the case where $\mathrm{SU}(a) = 2$ and $\mathrm{SU}(b) < \omega$, improving on work of Peretz, so this is the least-rank case that remains open.}
            \end{itemize}
            \item Do countably categorical simple theories satisfy the conclusion of the stable forking conjecture?
        \end{itemize}
    \end{question}

\end{remark}

\bibliographystyle{plain}
\bibliography{refs}

\end{document}